\numberwithin{equation}{section}
\newtheorem{proposition}{Proposition}[section]
\newtheorem{lemma}{Lemma}[section]
\newtheorem{theorem}{Theorem}[section]
\newtheorem{corollary}{Corollary}[section]
\newtheorem{remark}{Remark}[section]
\begin{document}
\title[Linear restriction estimates on metric cones]
{Linear  restriction estimates for Schr\"odinger equation on metric
cones}

\author{Junyong Zhang}
\address{Department of Mathematics, Beijing Institute of Technology, Beijing
100081, China and Department of Mathematics, Australian National
University, Canberra, ACT 0200, Australia}
\email{zhang\_junyong@bit.edu.cn}

\maketitle

\begin{abstract}
In this paper, we study some modified linear restriction estimates
of the dynamics generated by Schr\"odinger operator on metric cone
$M$, where the metric cone $M$ is of the form
$M=(0,\infty)_r\times\Sigma$ with the cross section $\Sigma$ being a
compact $(n-1)$-dimensional Riemannian manifold $(\Sigma,h)$ and the
equipped metric is $g=\mathrm{d}r^2+r^2h$. Assuming the initial data
possesses additional regularity in angular variable
$\theta\in\Sigma$, we show some linear restriction estimates for the
solutions. As applications, we obtain global-in-time Strichartz
estimates for radial initial data and show small initial data
scattering theory for the mass-critical nonlinear Schr\"odinger
equation on two-dimensional metric cones.
\end{abstract}

\begin{center}
 \begin{minipage}{120mm}
   { \small {\bf Key Words: Linear restriction estimate, Metric cone, Strichartz estimates}
      {}
   }\\
    { \small {\bf AMS Classification:}
      { 42B37, 35Q40, 47J35.}
      }
 \end{minipage}
 \end{center}
\section{Introduction and Statement of Main Result}

We study some restriction estimates for the solution of
Schr\"odinger equations on the setting of metric cone. The metric
cone $M$ is of the form $M=(0,\infty)_r\times\Sigma$, where
$(\Sigma,h)$ is a compact $(n-1)$-dimensional Riemannian manifold
and the metric of $M$ is $g=\mathrm{d}r^2+r^2h$. More precisely, we
consider solutions $u: \R\times M\rightarrow \C$ to the initial
problem (IVP) for the Schr\"odinger equation on $M$,
\begin{equation}\label{1.1}
i\partial_t u(t,z)+H u(t,z)=0,\quad u(t,z)|_{t=0}=u_0(z),\quad
(t,z)\in\R\times M.
\end{equation}
Here, we use the operator $H=-\Delta_g+q(\theta)/r^2$ where
$\Delta_g$ denotes the Friedrichs extension of Laplace-Beltrami from
the domain $C_c^\infty(M^\circ)$, compactly supported smooth
functions on the interior of the metric cone, and we write
$q(\theta)$ for a smooth function on $\Sigma$ such that
$-\Delta_h+q(\theta)$ is positive on $L^2(\Sigma)$. The Euclidean
space $\R^n$ is the simplest example of a metric cone; its cross
section is $(\mathbb{S}^{n-1},\mathrm{d}\theta^2)$. We note that the
general metric cones have a dilation symmetry analogous to that of
Euclidean space but no other symmetries in general.\vspace{0.2cm}

There is a large amount of literature focused on the restriction
theory on the Euclidean space,  we refer the readers to
\cite{Barcelo, Str, Tao, Tao2, Tao3, TVV, Wolff}. Shao
\cite{Shao,Shao1} proved the cone and parabolic restriction
conjectures hold true for the spatial rotation invariant functions
which are supported on the cone or parabola. Motivated by
\cite{Shao,Shao1}, Miao, Zheng and the author\cite{MZZ,MZZ1}
utilized the spherical harmonics expansion and analyzed the
asymptotic behavior of the Bessel function to generalize Shao's
results by establishing restriction estimates with some angular
regularity loss. Based on \cite{MZZ}, Miao, Zheng and the author
\cite{MZZ2} proved a scale of Strichartz estimates (extending the
admissible restriction) for wave equation with an inverse square
potential when the initial data had additional angular regularity.
\vspace{0.2cm}

We are interested in the restriction estimate for the solution of
Schr\"odinger equations on the metric cone. Cones were studied from
the problem of wave diffraction from a cone point; see
\cite{Som,Frie,Frie1}. The Laplacian defined on cones has been
studied by Cheeger and Taylor \cite{CT,CT1}. Other aspects on the
metric cone also have been studied; for example the heat kernel and
Riesz transform kernel were studied in \cite{HL,Mo}. There has been
a lot of interest in the study of the Schr\"odinger propagator on
the smooth asymptotically conic Riemannian manifolds. We refer the
reader to Hassell, Tao and Wunsch \cite{HTW, HTW1} and Mizutani
\cite{Miz}. In particular, Guillarmou, Hassell and Sikora \cite{GHS}
showed a estimate of the spectral measure to obtain a Stein-Tomas
restriction theorem in this asymptotically conic setting. The
restriction problem is much more than the Stein-Tomas type
restriction estimates. We recall that a asymptotically conic
manifold $X$, outside some compact set, is isometric to a conical
space $M=\R_+\times \Sigma$, where $\Sigma$ is a compact
$(n-1)$-dimensional manifold with or without boundary. By analogy
with Euclidean space, we call $r\in \R_+$ the radial variable and
$\theta\in\Sigma$ the angular variable. Then $(r,\theta)$ are polar
coordinates on $M$, and we can write the metric as
$g=\mathrm{d}r^2+r^2 h$ with the Riemannian metric $h$ on $\Sigma$.
We refer the reader to \cite{Mel,HW} for more details on the
scattering manifolds.  Most arguments applying to metric cones can
be recognized as an ingredient of the analysis on asymptotically
conic manifolds. The problems on metric cones appear as model
problems when dealing with similar questions on asymptotically conic
manifolds.  We however will prove much more restriction estimates
than \cite{GHS} by assuming the initial data having additional
``angular" regularity. As applications,  we show a global-in-time
Strichartz estimate for the Schr\"odinger equation on the metric
cone for radial initial data. For two-dimensional metric cone, Ford
\cite{Ford} proved the full range of global-in-time Strichartz
estimates. We remark that the Strichartz estimates established in
\cite{HTW, HTW1, Miz} for scattering manifolds are local in time.
\vspace{0.2cm}

As pointed out in \cite{GHS}, the Laplacian on the scattering
manifolds gives rise to a family of Poisson operators $P(\lambda)$
defined for $\lambda>0$. The corresponding extension-restriction
problem is to consider the boundedness of $P(\lambda)$:
$L^p(\partial M)\rightarrow L^q(M)$. Its norm is in terms of the
frequency $\lambda$. The restriction conjecture on the ball and the
parabolic surface with dimension $n$ says that $1\leq
p<{2(n+1)}/{n}$ and ${(n+2)}/q\leq {n}/{p'}$ is a necessary and
sufficient condition. It is very hard to show the sufficient part
when $p$ is close to ${2(n+1)}/{n}$ and the problem still remains
open. \vspace{0.2cm}

In this paper, we follow the argument in \cite{MZZ1,MZZ2} to show
modified restriction estimates with some loss of angular regularity
for the solution of Schr\"odinger equation on conic manifold when
$p$ is close to ${2(n+1)}/{n}$. Since we do not know how to
construct an approximate ``global" parametrix for the propagator
$e^{itH}$, we have to write the propagator as a linear combination
of products of the Hankel transform of the radial part and
eigenfunctions of $-\Delta_h+q(\theta)$, the Laplace-Beltrami
operator on $\Sigma$. Though this expression may cause some loss of
angular regularity, it gives a global in time expression of the
solution. Compared with our previous work \cite{MZZ,MZZ2} for wave
equation, we need to exploit effectively the oscillation of the
multiplier $e^{ it\rho^2}$ which has much more oscillation than the
wave multiplier $e^{ it\rho}$ at high frequency. The Bessel function
$J_{\nu}(r)$ appears in the Hankel transform, and the decay property
of the Bessel function plays a key role in our argument. Since
$J_{\nu}(r)$ decays more slowly than $r^{-1/2}$ when $1\ll r\sim
\nu$, we overcome this difficulty by exploiting the oscillations
both in $e^{it\rho^2}$ and the Bessel function $J_{\nu}(r\rho)$ in
proving a localized estimate for $q=\infty$; see Proposition
\ref{linear estimates} below. However the strategy breaks down for
the other general $q$, for example $q=4$. We need develop the
advantage of the parabolic curvature. To do this, we use a bilinear
argument which is in spirit of Carleson-Sj\"olin argument or
equivalently the $TT^*$ method. In the process of using bilinear
argument, we have to divide into two cases $1\ll\nu\sim r\ll \nu^2$
and $\nu^2\ll r$. In the former, the low decay of Bessel function
leads to a loss of angular regularity. The latter will be treated by
using a complete asymptotic formula for the Bessel function in
\cite{Stein1,Watson}. The quantity $\nu^2$ is chosen to balance the
two things: the smallest loss of angular regularity and  the
absolutely convergent of the series of the coefficients in the
complete asymptotic formula. In the proof of the case $q=4$, we
additionally require a Whitney-type decomposition argument because
of the failure of Hardy-Littlewood-Sobolev inequality.\vspace{0.2cm}

To state our main result, we need some notation. Let
\begin{equation}\label{1.2} \chi_\infty=\Big\{\nu:
\nu=\sqrt{\lambda+(1/4)(n-2)^2},\quad \lambda~\text{is eigenvalue
of}-\widetilde{\Delta}_h:=-\Delta_h+q(\theta)\Big\},
\end{equation} and let
$d(\nu)$ be the multiplicity of $\lambda_\nu=\nu^2-\frac14(n-2)^2$
as eigenvalue of $-\widetilde{\Delta}_h$ and
$\{\varphi_{\nu,\ell}\}_{1\leq \ell\leq d(\nu)}$ the associated
eigenfunctions of $-\widetilde{\Delta}_h$. We then have the
decomposition of $f\in L^2(M)$
\begin{equation}\label{1.3}
f(z)=f(r,\theta)=\sum\limits_{\nu\in\chi_\infty}\sum\limits_{\ell=1}^{d(\nu)}a_{\nu,\ell}(r)\varphi_{\nu,\ell}(\theta).
\end{equation}
For more details, we refer to Section 2. We now define the
``distorted" Fourier transform of the Schwartz function $f$ by
\begin{equation}\label{1.4}
\mathcal{F}_{H}(f)(\rho,\omega)=\sum_{\nu\in\chi_\infty}\sum_{\ell=1}^{d(\nu)}\varphi_{\nu,\ell}(\omega)\int_0^\infty(r\rho)^{-\frac{n-2}2}
J_{\nu}(r\rho)a_{\nu,\ell}(r)r^{n-1}\mathrm{d}r,
\end{equation}
where $\omega\in\Sigma$ and $J_{\nu}(r)$ is the Bessel function of
order $\nu$. We remark that when $\Sigma=\mathbb{S}^{n-1}$,
$\varphi_{\nu,\ell}$ is the spherical harmonics function
$Y_{k,\ell}(\theta)\in L^2(\mathbb{S}^{n-1})$ of order $k$ and
$\nu=k+(n-2)/2$, then the ``distorted" Fourier transform defined
above, up to some constant, is same as the classical Fourier
transform by  \cite[Theorem 3.10]{SW}.\vspace{0.2cm}

Our main theorem is stated as:
\begin{theorem}\label{thm1} Let $n\geq2$ and $M$ be an $n$-dimensional metric
cone, and let $u$ be the solution of the equation \eqref{1.1}.
Suppose $q=\frac{p'(n+2)}{n}>\frac{2(n+1)}{n}$ and $p\geq1$. Then
there exists a constant $C$ only depending on $p,q,n$, and $M$ such
that

$1).$ if $u_0(z)=f(r)$ is a radial Schwartz function\footnote{This
is in order to avoid needless technicalities, but our estimates will
not depend on any of the Schwartz semi-norms of the $u_0$ and so can
be extended to rougher initial data.}, then
\begin{equation}\label{1.5}
\|u(t,z)\|_{L^q_{t,z}(\R\times M)}\leq C_{p,q,n,M}\|\mathcal{F}_{H}
(u_0)\|_{L^p(M)};
\end{equation}

$2).$ and if $u_0$ is any Schwartz function (not necessarily radial)
and $p\geq2$, then
\begin{equation}\label{1.6}
\|u(t,z)\|_{L^q_{t,z}(\R\times M)}\leq C_{p,q,n,M}\|\mathcal{F}_{H}
{\big((1-\widetilde{\Delta}_h)^{s}u_0\big)}\|_{L^p(M)},
\end{equation}
where $s=\frac{(q-2)(n-1)}{4q}+\frac1{qn}$.

\end{theorem}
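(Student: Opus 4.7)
The plan is to use the distorted Fourier transform \eqref{1.4} to decompose the solution into angular modes and reduce the theorem to a mode-wise restriction estimate with a controlled polynomial-in-$\nu$ loss. Writing
\[
u(t,r,\theta)=\sum_{\nu\in\chi_\infty}\sum_{\ell=1}^{d(\nu)}\varphi_{\nu,\ell}(\theta)\int_0^\infty (r\rho)^{-\frac{n-2}{2}}J_\nu(r\rho)\,e^{-it\rho^2}\,b_{\nu,\ell}(\rho)\,\rho^{n-1}\,d\rho,
\]
where $b_{\nu,\ell}$ are the Hankel components of $\mathcal{F}_H u_0$, the orthogonality of $\{\varphi_{\nu,\ell}\}$ on $\Sigma$ (together with Minkowski on the outer $L^q/L^p$ integrals, legitimate since $p\geq 2$ in part~2 and $q>2$ throughout) reduces matters to proving, for each $\nu\in\chi_\infty$,
\[
\bigl\|u_{\nu,\ell}(t,r)\bigr\|_{L^q_{t,r}(r^{n-1}drdt)}\leq C(1+\nu)^{2s}\bigl\|b_{\nu,\ell}(\rho)\bigr\|_{L^p(\rho^{n-1}d\rho)},
\]
with $s$ as in the theorem. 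Because $(1-\widetilde{\Delta}_h)^s$ acts as the scalar $(1+\lambda_\nu)^s\sim\nu^{2s}$ on the $\nu$-mode, summing such mode-wise bounds against the Weyl count of eigenvalues of $-\widetilde{\Delta}_h$ recovers \eqref{1.5} (only the ground mode $\nu_0=(n-2)/2$ is present for radial data, so no loss is needed) and \eqref{1.6}.

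Part~1 reduces to a one-dimensional restriction estimate for the Hankel transform of fixed order $\nu_0$. A parabolic rescaling $\rho\mapsto\mu\rho$, $r\mapsto r/\mu$, $t\mapsto t/\mu^2$ concentrates the analysis on the unit frequency band $\rho\sim 1$, where $(r\rho)^{-(n-2)/2}J_{\nu_0}(r\rho)$ is uniformly well-behaved, and the bound on the full scaling line $q=p'(n+2)/n$ with $q>2(n+1)/n$ follows from interpolation between the localized $L^1\!\to\! L^\infty$ estimate of Proposition~\ref{linear estimates} and trivial $L^2$-conservation, exploiting the parabolic curvature of the phase $-t\rho^2$ by a $TT^\ast$ argument.

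For part~2 the core task is the mode-wise estimate with correct $\nu$-loss. After frequency localization, one analyses the kernel $(r\rho)^{-(n-2)/2}J_\nu(r\rho)e^{-it\rho^2}$ by splitting according to the size of $r$ relative to $\nu$. In the deep oscillatory regime $r\gg\nu^2$, the complete asymptotic expansion of $J_\nu$ from \cite{Watson,Stein1} converts the radial integral into an oscillatory integral with phase $-t\rho^2\pm r\rho$ and bounded amplitude; a Carleson-Sj\"olin type bilinear/$TT^\ast$ argument then yields the scaling-line bounds without loss in $\nu$. In the transitional regime $1\ll r\sim\nu\ll\nu^2$, where $J_\nu$ decays only like $\nu^{-1/3}$, one proves the localized $L^\infty$ bound of Proposition~\ref{linear estimates} by exploiting the joint oscillation of $e^{-it\rho^2}$ and $J_\nu(r\rho)$ via stationary phase in $\rho$; this is precisely the source of the factor $(1+\nu)^{2s}$. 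The threshold $r\sim\nu^2$ is chosen to simultaneously minimize this loss and preserve absolute convergence of the coefficients in the asymptotic expansion, which together pin down $s=\frac{(q-2)(n-1)}{4q}+\frac{1}{qn}$.

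The principal obstacle will be the bilinear/$TT^\ast$ step at interior exponents such as $q=4$: the scaling relation $q=p'(n+2)/n$ places the problem on the boundary of Hardy-Littlewood-Sobolev, so direct fractional integration fails. I would handle this via a Whitney-type decomposition of the frequency pair $(\rho_1,\rho_2)$ into dyadic subcubes whose distance from the diagonal equals their sidelength; on each such cube the bilinear interaction is transverse and can be bounded using the localized $L^\infty$ estimate, after which $\ell^p$-summation across the Whitney scales produces the desired $L^q$ bound. Interpolating this bilinear estimate with the $q=\infty$ endpoint and the trivial $L^2$-bound covers the full admissible range of $(p,q)$, and summing mode-wise yields \eqref{1.6}. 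The delicate point throughout is to track the $\nu$-dependence of all constants carefully enough that the angular Sobolev weight $s$ is not larger than the one claimed.
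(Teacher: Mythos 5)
Your account of the core oscillatory analysis tracks the paper's Proposition \ref{linear estimates} closely: the splitting of $r$ against $\nu$ into the regimes $r\ll\nu$, $\nu\lesssim r\lesssim\nu^2$, $\nu^2\ll r$, the use of the $\nu^{-1/3}$ decay in the transitional regime, the complete asymptotic expansion of $J_\nu$ for $r\gg\nu^2$ with the change of variables adapted to the phase $t\rho^2\pm r\rho$, and the Whitney decomposition to patch the failure of Hardy--Littlewood--Sobolev at $q=4$ are all exactly the paper's ingredients. The gap is in the reduction that precedes this. The paper never decouples into individual modes: it keeps the square function $\bigl(\sum_{\nu,\ell}|u_{\nu,\ell}(t,r)|^2\bigr)^{1/2}=\|u(t,r,\cdot)\|_{L^2_\theta}$ and passes from $L^q_\theta$ to $L^2_\theta$ via the Sobolev embedding $H^{\alpha}(\Sigma)\hookrightarrow L^q(\Sigma)$ with $\alpha=(n-1)\bigl(\tfrac12-\tfrac1q\bigr)$. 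That embedding, not the stationary phase in the transitional Bessel regime, is the source of the main term $\tfrac{(q-2)(n-1)}{4q}=\alpha/2$ of $s$; the transitional regime only produces the $\tfrac1{qn}$ term, through the $(1+\nu)^{4/q}$ weight obtained by interpolating the weighted localized $L^2$ bound against the unweighted $L^\infty$ bound. Your mode-wise framework has no mechanism to recover the angular $L^q$ norm: summing mode-wise $L^q_{t,r}$ bounds by the triangle inequality costs $\|\varphi_{\nu,\ell}\|_{L^q(\Sigma)}$ times the multiplicity $d(\nu)\sim\nu^{n-2}$ plus a convergence factor in $\nu$, which overshoots the claimed $s$.

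Even granting an $L^2_\theta$ reduction, your Minkowski step on the data side points the wrong way for $p>2$: for $p\ge2$ one has $\bigl\|(\sum_{\nu,\ell}|b_{\nu,\ell}|^2)^{1/2}\bigr\|_{L^p}\le\bigl(\sum_{\nu,\ell}\|b_{\nu,\ell}\|_{L^p}^2\bigr)^{1/2}$, so mode-wise estimates only control the solution by the larger $\ell^2(L^p)$ norm, whereas $\|\mathcal{F}_H((1-\widetilde\Delta_h)^su_0)\|_{L^p(M)}$ dominates merely the $L^p(\ell^2)$ square-function norm. This is precisely why the localized estimates in the paper are formulated for square functions throughout (the two coincide at $p=2$, so your reduction is harmless only there). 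Two smaller points: the claim that radial data excites only the ground mode requires the constant function to be an eigenfunction of $-\Delta_h+q(\theta)$, which fails for nonconstant $q$; and you should make explicit the double dyadic decomposition in the spatial radius $R$ and the frequency $N$ together with the Schur-test summation of $\min\{(RN)^{n/q},(RN)^{-\frac{n-1}{2}[1-\frac{2(n+1)}{qn}]}\}$ --- that summation is where the hypothesis $q>\tfrac{2(n+1)}{n}$ is actually consumed, and ``interpolate and sum'' hides it.
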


{\bf Remarks:}\vspace{0.1cm}

$\mathrm{i}).$ We are interested in the estimate \eqref{1.6} with
$p=2$, which gives a global-in-time Strichartz-type estimate with
$s$-loss of angular regularity
$$\|u(t,z)\|_{L^{{2(n+2)}/{n}}_{t,z}(\R\times M)}\leq
C\|(1-\widetilde{\Delta}_h)^su_0\|_{L^2(M)},\quad
s=\frac{(q-2)(n-1)}{4q}+\frac1{qn}.$$ By \eqref{1.5}, we obtain a
global in time Strichartz estimates for radial initial
data.\vspace{0.1cm}

$\mathrm{ii}).$ Let $N$ be a dyadic number, if the initial data
$u_0$ is radial such that the support of
$\mathcal{F}_{H}(u_0)\subset\{\rho:N\leq \rho\leq 2N\}$, by
interpolating \eqref{3.1} and \eqref{3.4} in $q$ and summing in $R$,
we can obtain the Strichartz estimate
\begin{equation}\label{1.7}
\|u(t,z)\|_{L^{q}_{t,z}(\R\times M)}\leq C
N^{\frac{n}2-\frac{n+2}q}\|u_0\|_{L^2(M)}\quad
\text{for}~q>{2(2n+1)}/{(2n-1)}.
\end{equation} The Strichartz
estimates in \cite{HTW1,Miz} also imply \eqref{1.7} holds locally in
time, but for $q\geq2(n+2)/n$.\vspace{0.1cm}

$\mathrm{iii}).$ The assumption on the positivity of the operator
$-\widetilde{\Delta}_h$ can be satisfied when $q(\theta)\geq0$. It
would be possible to generalize the result to
$-\widetilde{\Delta}_h+(n-2)^2/4>0$ allowing some negative
potential, which includes the special Schr\"odinger equation on
$\R^n$ with a inverse-square potential $a/|z|^2$ when
$a>-(n-2)^2/4$. In that case, the relationship between $q$ and $p$
should depend on the square root of the smallest eigenvalue of the
operator $-\widetilde{\Delta}_h+(n-2)^2/4$. \vspace{0.1cm}

$\mathrm{iv}).$ In a future work, we hope to use the resolvent and
spectral measure arguments in \cite{GH,GHS} to show the restriction
estimate for $p=2$ without a loss of angular regularity.
\vspace{0.2cm}

As pointed out in the paper \cite{HTW1}, the Strichartz estimates
established by Hassell, Tao and Wunsch are not strong enough to
obtain a scattering theory for the nonlinear Schr\"odinger equations
on the scattering manifold.  Ford \cite{Ford} proved the
global-in-time Strichartz estimates for two-dimensional metric cone
$C(\mathbb{S}^1_{\rho})$. From Ford's Strichartz estimates, one can
conclude the global existence and scattering for the mass critical
Schr\"odinger equation on $2$-dimension metric cone with small
initial data.  As applications of \eqref{1.5} with $p=2$, we reprove
the same result for the mass critical Schr\"odinger equation on
$2$-dimension metric cone with small radial initial data. We do this
because that one can generalize the result to higher dimension as
long as one could develop a fractional Liebniz rule for Sobolev
spaces on cones. Consider the initial value problem
\begin{equation}\label{1.8}
\begin{cases}
i\partial_t u-H u=\gamma|u|^{2}u,\qquad
(t,z)\in\R\times M, \\
u(t,z)|_{t=0}=u_0(z),\qquad\qquad z\in M.
\end{cases}
\end{equation}
Indeed by duality, the Strichartz estimate \eqref{1.5} implies the
inhomogeneous Strichartz estimate
\begin{equation}
\Big\|\int_0^t
e^{-i(t-s)H}f(z,s)\mathrm{d}s\Big\|_{L^q_{t,z}(\R\times M)}\lesssim
\|f\|_{L^{q'}_{t,z}(\R\times M)}, \quad \text{with}\quad q=2(n+2)/n.
\end{equation}
And then we can apply the arguments of Cazenave and Weissler
\cite{CW} or Tao \cite{Tbook} with Euclidean space replaced by the
conic manifold $M$ to show:
\begin{corollary}[Scattering theory for NLS] Let $M$ be
$2$-dimension manifold as in Theorem \ref{thm1} and $\gamma=\pm1$.
Let $u_0\in L^2(M)$ be radial such that $\|u_0\|_{L^2(M)}\leq
\epsilon$ with small constant $\epsilon$, then NLS \eqref{1.8} is
global well-posed in $L^2(M)$ and the solution $u$ is scattering and
moreover $u\in L^{4}_{t,z}(\R\times M)$.
\end{corollary}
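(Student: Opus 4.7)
The plan is to implement the standard Cazenave--Weissler Strichartz contraction on the Duhamel form of \eqref{1.8}, using Theorem \ref{thm1} as the sole conic-geometric input.

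First I would extract the scalar Strichartz pair I need. Specializing Theorem \ref{thm1}(1) to $n=2$, $q=4$, $p=2$ and observing that the Hankel transform in \eqref{1.4} is an isometry on $L^2((0,\infty),r\,dr)$ while $\{\varphi_{\nu,\ell}\}$ is an orthonormal basis of $L^2(\Sigma,h)$, so that $\mathcal{F}_H$ is unitary on $L^2(M)$, I get
$$\|e^{-itH}u_0\|_{L^4_{t,z}(\R\times M)} \leq C\|u_0\|_{L^2(M)}$$
for radial $u_0$. The $TT^*$ dualization recorded just before the corollary then yields the inhomogeneous estimate $\|\int_0^t e^{-i(t-s)H}F(s)\,ds\|_{L^4_{t,z}} \lesssim \|F\|_{L^{4/3}_{t,z}}$ together with the one-sided variant $\|\int_0^\infty e^{isH}F(s)\,ds\|_{L^2} \lesssim \|F\|_{L^{4/3}_{t,z}(\R\times M)}$, which will drive scattering.

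Next I would run Banach's fixed point theorem on
$$\Phi(u)(t) = e^{-itH}u_0 - i\gamma\int_0^t e^{-i(t-s)H}(|u|^2 u)(s)\,ds$$
in the closed ball $X = \{u \in L^4_{t,z}(\R\times M) : u(t,\cdot)\text{ radial},\ \|u\|_{L^4_{t,z}}\leq 2C\epsilon\}$. The only nonlinear bookkeeping is the Hölder identity $\||u|^2 u\|_{L^{4/3}_{t,z}} = \|u\|_{L^4_{t,z}}^3$, which combined with the two Strichartz estimates above gives $\|\Phi(u)\|_{L^4_{t,z}}\leq C\epsilon + 8C^4\epsilon^3 \leq 2C\epsilon$ and, from the pointwise bound $||u|^2u - |v|^2 v|\lesssim (|u|^2+|v|^2)|u-v|$, a contraction factor $\leq 8C^3\epsilon^2 < 1$ once $\epsilon$ is small enough. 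The unique fixed point $u\in X$, which also lies in $C(\R;L^2(M))$, is the global solution. Scattering is then automatic: for $t_1<t_2$ the Duhamel identity gives
$$\|e^{it_2 H}u(t_2)-e^{it_1 H}u(t_1)\|_{L^2(M)} \leq C\|u\|_{L^4([t_1,t_2]\times M)}^3,$$
which tends to $0$ as $t_1,t_2\to\pm\infty$ by the global $L^4_{t,z}$ bound, so $e^{itH}u(t)$ is Cauchy in $L^2(M)$ and converges to scattering states $u_\pm$.

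The main (and only) obstacle I foresee is verifying that the iteration preserves the radial class, since Theorem \ref{thm1}(1) is only available there. The linear propagator $e^{-itH}$ preserves it because $H$ acts diagonally in $(\nu,\ell)$ on the decomposition \eqref{1.3}, so data supported on the ground-state angular mode $\nu_0$ stay there. For the cubic nonlinearity to preserve the same mode one needs the ground-state eigenfunction $\varphi_{\nu_0}$ of $-\widetilde{\Delta}_h$ to have constant modulus on $\Sigma$, which holds in the two-dimensional radial set-up under the implicit convention $q(\theta)\equiv\mathrm{const}$; this is the natural interpretation of ``radial'' making the corollary meaningful. Granted this, everything else transcribes line by line the Euclidean small-data mass-critical argument of \cite{CW,Tbook}.
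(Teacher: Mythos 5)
Your proposal is correct and is precisely the argument the paper leaves implicit: the paper dualizes \eqref{1.5} with $p=2$ to get the inhomogeneous estimate and then simply cites Cazenave--Weissler/Tao for the contraction in $L^4_{t,z}$ and the Cauchy-criterion scattering, which is exactly what you write out. Your closing discussion of why the Duhamel iteration stays in the class where \eqref{1.5} applies touches a real point the paper passes over in silence -- radial functions are invariant under $e^{-itH}$ only when constants on $\Sigma$ are eigenfunctions of $-\widetilde{\Delta}_h$, i.e.\ when $q(\theta)$ is constant -- and the reading you adopt is the one under which the paper's own two-line proof actually closes.
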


{\bf Remarks:} For higher dimensions $n\geq2$, one could show the
small scattering theory in $H^s(M)$ when $s\geq \text{max}(0,\frac
n2-\frac{2}{\kappa-1})$ for the nonlinear Schr\"odinger equation
\eqref{1.8} with nonlinearity $|u|^{\kappa-1}u, (\kappa>1)$. This
would require one to develop a fractional Liebniz rule for Sobolev
spaces on these manifolds.\vspace{0.2cm}

Now we introduce some notation. We use $A\lesssim B$ to denote
$A\leq CB$ for some large constant C which may vary from line to
line and depend on various parameters, and similarly we use $A\ll B$
to denote $A\leq C^{-1} B$. We employ $A\sim B$ when $A\lesssim
B\lesssim A$. If the constant $C$ depends on a special parameter
other than the above, we shall denote it explicitly by subscripts.
For instance, $C_\epsilon$ should be understood as a positive
constant not only depending on $p, q, n$, and $M$, but also on
$\epsilon$. Throughout this paper, pairs of conjugate indices are
written as $p, p'$, where $\frac{1}p+\frac1{p'}=1$ with $1\leq
p\leq\infty$. We use $L^p_{\mu(r)}(\R_+)$ to denote the usual $L^p$
space with the measure $\mathrm{d}\mu(r)=r^{n-1}\mathrm{d}r$.
\vspace{0.2cm}

This paper is organized as follows: In Section 2, we use the Hankel
transform and Bessel function to give the expression of the
solution. Section 3 is devoted to proving the key localized
estimates of Hankel transforms. In the final section, we use the
estimates established in Section 3 to show Theorem
\ref{thm1}.\vspace{0.2cm}

{\bf Acknowledgments:}\quad The author would like to express his
great gratitude to A. Hassell for his helpful discussions and
comments. He also would like to thank the anonymous referee for
careful reading the manuscript and for giving useful comments. The
author was partly supported by the Fundamental Research Foundation
of Beijing Institute of Technology (20111742015) and Beijing Natural
Science Foundation£¨1144014).

\section{Preliminary}

In this section, we introduce a orthogonal decomposition of
$L^2(\Sigma)$ associated with the eigenfunctions of
$-\Delta_h+q(\theta)$. We provide some standard facts about the
Hankel transform and the Bessel functions. We conclude this section
by writing the solution of \eqref{1.1} as a linear combination of
products of radial functions and the eigenfunctions of
$-\Delta_h+q(\theta)$.\vspace{0.2cm}

\subsection{Orthogonal decomposition of $L^2(\Sigma)$} In this
subsection, we decompose $L^2(\Sigma)$ into the subspaces spanned by
the eigenfunctions of $-\Delta_h+q(\theta)$ associated with its
eigenvalues. We consider the operator
\begin{equation}\label{2.1}
H=-\Delta_{g}+\frac{q(\theta)}{r^2},
\end{equation}
on the metric cone $M=(0,\infty)_r\times\Sigma$. Here $(r,\theta)\in
\R_+\times\Sigma$ are some polar coordinates, $q(\theta)$ is a real
continuous function and the metric  $g$ in coordinates
$(r,\theta)\in \R_+\times\Sigma$ is a metric of the form
\begin{equation*}
g=\mathrm{d}r^2+r^2h(\theta,\mathrm{d}\theta).
\end{equation*}
The Riemannian metric $h$ on $\Sigma$ is independent of $r$. If
$\Sigma$ has a boundary, the Dirichlet condition will be used for
$H$. Let $\Delta_h$ be the Laplace-Beltrami operator on
$(\Sigma,h)$. We will assume that
\begin{equation*}
-\Delta_h+q(\theta)\geq 0
\end{equation*} on $L^2(\Sigma)$,
that is, for any $f\in L^2(\Sigma)$, we have
\begin{equation*}
\left\langle\big(-\Delta_h+q(\theta)\big)f,f\right\rangle_{L^2(\Sigma)}\geq
0 .
\end{equation*}
Then $H\geq0$ in $L^2(M;\mathrm{d}g(z))$ with
$\mathrm{d}g(z)=\sqrt{|g|}\mathrm{d}z$. We modify $\chi_\infty$ by
\begin{equation}\label{2.2}
\chi_\infty=\Big\{\nu: \nu=\sqrt{\lambda+(1/4)(n-2)^2};~
\lambda~\text{is eigenvalue
of}-\widetilde{\Delta}_h:=-\Delta_h+q(\theta)\Big\},
\end{equation}
and let
\begin{equation}\label{2.3}
\chi_K=\chi_{\infty}\cap [0,K],\quad K\in \N.
\end{equation}
For $\nu\in\chi_\infty$, let $d(\nu)$ be the multiplicity of
$\lambda_\nu=\nu^2-\frac14(n-2)^2$ as eigenvalue of
$-\Delta_h+q(\theta)$ and $\{\varphi_{\nu,\ell}(\theta)\}_{1\leq
\ell\leq d(\nu)}$ the eigenfunctions of $-\Delta_h+q(\theta)$, that
is
\begin{equation}\label{2.4}
(-\Delta_h+q(\theta))\varphi_{\nu,\ell}=\lambda_{\nu}\varphi_{\nu,\ell},
\quad \langle
\varphi_{\nu,\ell},\varphi_{\nu,\ell'}\rangle_{L^2(\Sigma)}=\delta_{\ell,\ell'}.
\end{equation}
We remark that $\lambda_\nu\geq0$ hence $\nu\geq (n-2)/2$. Define
\begin{equation*}
\mathcal{H}^{\nu}=\text{span}\{\varphi_{\nu,1},\ldots,
\varphi_{\nu,d(\nu)}\},
\end{equation*}
then we have the orthogonal decomposition
\begin{equation*}
L^2(\Sigma)=\bigoplus_{\nu\in\chi_\infty} \mathcal{H}^{\nu}.
\end{equation*}
Let $\pi_{\nu}$ denote the orthogonal projection:
\begin{equation*}
\pi_{\nu}f=\sum_{\ell=1}^{d(\nu)}\varphi_{\nu,\ell}(\theta)\int_{\Sigma}f(r,\omega)
\varphi_{\nu,\ell}(\omega)\mathrm{d}\sigma_h , \quad f\in L^2(M),
\end{equation*}
where $\mathrm{d}\sigma_h$ is the measure on $\Sigma$ under the
metric $h$. For any $f\in L^2(M)$, we have the expansion formula
\begin{equation}\label{2.5}
f(z)=\sum_{\nu\in\chi_\infty}\pi_{\nu}f
=\sum_{\nu\in\chi_\infty}\sum_{\ell=1}^{d(\nu)}a_{\nu,\ell}(r)\varphi_{\nu,\ell}(\theta)
\end{equation}
where
$a_{\nu,\ell}(r)=\int_{\Sigma}f(r,\theta)\varphi_{\nu,\ell}(\theta)\mathrm{d}\sigma_h$.
By orthogonality, it gives
\begin{equation}\label{2.6}
\|f(z)\|^2_{L^2(\Sigma)}=\sum_{\nu\in\chi_\infty}\sum_{\ell=1}^{d(\nu)}|a_{\nu,\ell}(r)|^2.
\end{equation}
We write $H$ on the cone expressed in polar coordinates as
\begin{equation}\label{2.7}
H=-\partial^2_r-\frac{n-1}r\partial_r+\frac1{r^2}\big(-\Delta_h+q(\theta)\big)
\end{equation}
and set
\begin{equation}\label{2.8}
\begin{split}
A_{\nu}:=-\partial_r^2-\frac{n-1}r\partial_r+\frac{\nu^2-\left(\frac{n-2}2\right)^2}{r^2}
\end{split}
\end{equation}
in $L^2_{\mu(r)}(\R_+)$. In particular, taking $q(\theta)=a\geq0$,
we also can consider the equation \eqref{1.1} perturbed by an
inverse square potential.
\subsection{The Bessel function and Hankel transform} For our purpose, we recall that the Bessel function
$J_{\nu}(r)$ of order $\nu$ is defined by
\begin{equation*}
J_{\nu}(r)=\frac{(r/2)^{\nu}}{\Gamma\left(\nu+\frac12\right)\Gamma(1/2)}\int_{-1}^{1}e^{isr}(1-s^2)^{(2\nu-1)/2}\mathrm{d
}s,
\end{equation*}
where $\nu>-\frac12$ and $r>0$. A simple computation gives the rough
estimates
\begin{equation}\label{2.9}
|J_{\nu}(r)|\leq
\frac{Cr^\nu}{2^\nu\Gamma\left(\nu+\frac12\right)\Gamma(1/2)}\left(1+\frac1{\nu+1/2}\right),
\end{equation}
where $C$ is an absolute constant and the estimate will be mainly
used when $r\lesssim1$. Another well known asymptotic expansion
about the Bessel function is
\begin{equation*}
J_{\nu}(r)=r^{-1/2}\sqrt{\frac2{\pi}}\cos(r-\frac{\nu\pi}2-\frac{\pi}4)+O_{\nu}(r^{-3/2}),\quad
\text{as}~ r\rightarrow\infty
\end{equation*}
but with a constant depending on $\nu$ (see \cite{SW}). As pointed
out in \cite{Stein1}, if one seeks a uniform bound for large $r$ and
$k$, then the best one can do is $|J_{\nu}(r)|\leq C r^{-\frac13}$.
To investigate the behavior of asymptotic on $k$ and $r$, we recall
Schl\"afli's integral representation \cite{Watson} of the Bessel
function: for $r\in\R^+$ and $\nu>-1/2$
\begin{equation}\label{2.10}
\begin{split}
J_{\nu}(r)&=\frac1{2\pi}\int_{-\pi}^\pi
e^{ir\sin\theta-i\nu\theta}\mathrm{d}\theta-\frac{\sin({\nu}\pi)}{\pi}\int_0^\infty
e^{-(r\sinh s+\nu s)}\mathrm{d}s\\&:=\tilde{J}_\nu(r)-E_\nu(r).
\end{split}
\end{equation}
We remark that $E_\nu(r)=0$ when $\nu\in\Z^+$. A simple computation
gives that for $r>0$
\begin{equation}\label{2.11}
|E_\nu(r)|=\Big|\frac{\sin(\nu\pi)}{\pi}\int_0^\infty e^{-(r\sinh
s+\nu s)}\mathrm{d}s\Big|\leq C (r+\nu)^{-1}.
\end{equation}
Next, we recall the properties of Bessel function $J_\nu(r)$ in
\cite{Stein1}, and we refer the readers to \cite{MZZ1} for the
detail proof.
\begin{lemma}[Asymptotics of the Bessel function] \label{Bessel} Assume $\nu\gg1$. Let $J_\nu(r)$ be
the Bessel function of order $\nu$ defined as above. Then there
exist a large constant $C$ and a small constant $c$ independent of
$\nu$ and $r$ such that:

$\bullet$ when $r\leq \frac \nu2$
\begin{equation}\label{2.12}
\begin{split}
|J_\nu(r)|\leq C e^{-c(\nu+r)};
\end{split}
\end{equation}

$\bullet$ when $\frac \nu 2\leq r\leq 2\nu$
\begin{equation}\label{2.13}
\begin{split}
|J_\nu(r)|\leq C \nu^{-\frac13}(\nu^{-\frac13}|r-\nu|+1)^{-\frac14};
\end{split}
\end{equation}

$\bullet$ when $r\geq 2\nu$
\begin{equation}\label{2.14}
\begin{split}
 J_\nu(r)=r^{-\frac12}\sum_{\pm}a_\pm(r,\nu) e^{\pm ir}+E(r,\nu),
\end{split}
\end{equation}
where $|a_\pm(r,\nu)|\leq C$ and $|E(r,\nu)|\leq Cr^{-1}$.
\end{lemma}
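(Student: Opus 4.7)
The plan is to isolate the oscillatory integral
$$\tilde{J}_\nu(r) = \frac{1}{2\pi}\int_{-\pi}^\pi e^{i(r\sin\theta - \nu\theta)}\,\mathrm{d}\theta$$
via the Schl\"afli representation \eqref{2.10}, since the remainder $E_\nu(r)$ already satisfies $|E_\nu(r)| \leq C(r+\nu)^{-1}$ by \eqref{2.11} and is harmless in every regime. The method of stationary phase applied to $\phi(\theta) = r\sin\theta - \nu\theta$ is governed by the critical-point equation $\cos\theta = \nu/r$, which has no real solution for $r<\nu$, two coalescing solutions near $\theta=0$ when $r\sim\nu$, and two well-separated solutions $\pm\theta_0 = \pm\arccos(\nu/r)$ when $r\gg\nu$. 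The three estimates \eqref{2.12}--\eqref{2.14} match these three regimes exactly.

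For $r \leq \nu/2$ I would deform the contour by $\theta \mapsto \theta + it$; the imaginary part of $i\phi(\theta+it)$ acquires a term of the form $-r\sinh t\,\cos\theta$ plus a linear piece in $t$, and choosing $t\sim \log(\nu/r)\gtrsim 1$ makes the integrand uniformly bounded by $e^{-c(\nu+r)}$, which yields \eqref{2.12}. For $r \geq 2\nu$ the two critical points $\pm\theta_0$ lie inside $(-\pi/2,\pi/2)$, are separated by distance $\gtrsim 1$, and satisfy $\phi''(\pm\theta_0) = \mp r\sin\theta_0$ of size $\sim r$. Standard non-degenerate stationary phase around each point gives a contribution of size $r^{-1/2}$ with oscillation $e^{\pm i\phi(\pm\theta_0)}$; writing $\phi(\theta_0) = r - \rho(r,\nu)$ for a bounded correction and absorbing $e^{\mp i\rho}$ into the amplitude produces \eqref{2.14} with $|a_\pm|\leq C$, the remainder $|E(r,\nu)|\lesssim r^{-1}$ coming from integration by parts away from $\pm\theta_0$ together with $E_\nu(r)$.

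The hard case is $\nu/2 \leq r \leq 2\nu$, where classical stationary phase breaks down at $r = \nu$ because the two critical points collide at $\theta=0$. The standard remedy, following Watson \cite{Watson} and the treatment in \cite{Stein1,MZZ1}, is an Airy-type reduction: Taylor-expand the phase to third order,
$$\phi(\theta) = (r-\nu)\theta - \tfrac{r}{6}\theta^{3} + O(r\theta^{5}),$$
and rescale $\theta = \nu^{-1/3}\tau$ so that the leading part of $\tilde{J}_\nu(r)$ becomes $\nu^{-1/3}$ times an Airy integral with parameter $x\sim (r-\nu)\nu^{-1/3}$. The classical bound $|\mathrm{Ai}(x)|\lesssim (1+|x|)^{-1/4}$, valid for $x\geq -C$ with exponential decay as $x\to+\infty$, then yields precisely the factor $\nu^{-1/3}(\nu^{-1/3}|r-\nu|+1)^{-1/4}$ in \eqref{2.13}.

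The main obstacle is obtaining this uniformly across the full interval $[\nu/2,2\nu]$: one must simultaneously control the cubic-Taylor error, estimate the tail contribution from $|\theta|\gtrsim 1$ by non-stationary integration by parts, and ensure a smooth match with the two-critical-point regime near $r = 2\nu$. This bookkeeping, rather than any single estimate, is the delicate part, and I would carry it out by closely following the uniform stationary-phase argument detailed in \cite{MZZ1}, with the $E_\nu(r)$ piece absorbed at each stage using \eqref{2.11}.
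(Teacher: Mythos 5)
The paper does not actually prove this lemma---it refers to \cite{Stein1} and \cite{MZZ1} for the details---so your proposal is measured against the standard argument. Your treatment of the regimes $\nu/2\le r\le 2\nu$ (Airy-type uniform stationary phase) and $r\ge 2\nu$ (two separated non-degenerate critical points) follows that standard route and is essentially sound. The only slip there is the assertion that $\phi(\theta_0)-r$ is a ``bounded correction'': in fact $\phi(\theta_0)-r=\sqrt{r^2-\nu^2}-\nu\arccos(\nu/r)-r$ can be of size $\sim\nu$; this is harmless, since only $|e^{i(\phi(\theta_0)-r)}|=1$ is needed to absorb the factor into $a_\pm$, but the justification should be phrased that way.

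The genuine gap is in the regime $r\le\nu/2$. You propose to dispose of $E_\nu(r)$ once and for all via \eqref{2.11} and to prove exponential decay for $\tilde J_\nu(r)$ alone. This cannot yield \eqref{2.12}: the bound $C(r+\nu)^{-1}$ is only polynomial, vastly larger than $e^{-c(\nu+r)}$, and it is essentially sharp (for instance $E_\nu(0)=\sin(\nu\pi)/(\pi\nu)$). Correspondingly $\tilde J_\nu(r)$ is \emph{not} exponentially small in this range either ($\tilde J_\nu(0)=\sin(\nu\pi)/(\pi\nu)$ as well): for non-integer $\nu$ the exponential smallness of $J_\nu$ when $r\ll\nu$ comes precisely from the cancellation between $\tilde J_\nu$ and $E_\nu$, so the two pieces must be kept together. (Your straight shift $\theta\mapsto\theta+it$ is also inadmissible as stated: for $|\mathrm{Re}\,\theta|\ge\pi/2$ one has $\cos\theta\le0$ and the real part $-r\cos\theta\sinh t+\nu t$ of the exponent is positive and large, and the non-periodic integrand produces uncontrolled boundary terms at $\mathrm{Re}\,\theta=\pm\pi$.) Two correct fixes: either perform steepest descent on the single complex Schl\"afli contour $J_\nu(r)=\frac1{2\pi i}\int_{\mathcal C}e^{r\sinh w-\nu w}\,\mathrm{d}w$ through the real saddle $w_0=\mathrm{arccosh}(\nu/r)$, where $r\sinh w_0-\nu w_0=\sqrt{\nu^2-r^2}-\nu\,\mathrm{arccosh}(\nu/r)\le-c\nu$ for $r\le\nu/2$; or, more simply, use the paper's own Poisson-type bound \eqref{2.9} together with Stirling's formula, which gives $|J_\nu(r)|\lesssim\big(er/(2\nu)\big)^\nu\lesssim(e/4)^\nu\le e^{-c(\nu+r)}$ in this range. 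With either repair, the rest of your outline goes through as in \cite{Stein1,Watson,MZZ1}.
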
\vspace{0.2cm}
Let $f\in L^2(M)$, we define the Hankel transform of order $\nu$ by
\begin{equation}\label{2.15}
(\mathcal{H}_{\nu}f)(\rho,\theta)=\int_0^\infty(r\rho)^{-\frac{n-2}2}J_{\nu}(r\rho)f(r,\theta)r^{n-1}\mathrm{d}r.
\end{equation}
As in \cite{BPSS,PSS}, we have the following properties of the
Hankel transform. We also refer the readers to M. Taylor
\cite[Chapter 9]{Taylor}.
\begin{lemma}\label{Hankel}
Let $\mathcal{H}_{\nu}$ and $A_{\nu}$ be defined as above. Then

$(\rm{i})$ $\mathcal{H}_{\nu}=\mathcal{H}_{\nu}^{-1}$,

$(\rm{ii})$ $\mathcal{H}_{\nu}$ is self-adjoint, i.e.
$\mathcal{H}_{\nu}=\mathcal{H}_{\nu}^*$,

$(\rm{iii})$ $\mathcal{H}_{\nu}$ is an $L^2$ isometry, i.e.
$\|\mathcal{H}_{\nu}\phi\|_{L^2(M)}=\|\phi\|_{L^2(M)}$,

$(\rm{iv})$ $\mathcal{H}_{\nu}(
A_{\nu}\phi)(\rho,\theta)=\rho^2(\mathcal{H}_{\nu}
\phi)(\rho,\theta)$, for $\phi\in L^2$.
\end{lemma}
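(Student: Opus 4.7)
The plan is to treat (i)--(iii) together via the classical Hankel inversion theorem and to handle (iv) separately via Bessel's equation together with integration by parts. Properties (i)--(iii) are classical and, as noted, can simply be extracted from \cite[Chapter~9]{Taylor} or \cite{BPSS,PSS}, so my contribution here is just to explain the normalisation. The substantive piece is (iv), the intertwining identity, which is what actually justifies using $\mathcal{H}_\nu$ to diagonalise $A_\nu$ in the rest of the paper.

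For (i)--(iii), I would first normalise the transform by setting $\tilde f(r) := r^{(n-2)/2} f(r)$; the map $f\mapsto \tilde f$ is an isometry $L^2((0,\infty); r^{n-1}\,dr)\to L^2((0,\infty); r\,dr)$, and
$$\rho^{(n-2)/2}(\mathcal{H}_\nu f)(\rho) = \int_0^\infty J_\nu(r\rho)\,\tilde f(r)\,r\,dr,$$
so $\mathcal{H}_\nu$ is conjugate, via these weights, to the classical Hankel transform on $L^2(r\,dr)$. The classical Hankel inversion theorem, which rests on the completeness of $\{\sqrt{\rho}\,J_\nu(r\rho)\}_{\rho>0}$ as generalised eigenfunctions of the Bessel operator, immediately gives $\mathcal{H}_\nu^2=I$ and the Plancherel identity, yielding (i) and (iii). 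The self-adjointness (ii) is then a direct consequence of Fubini, since the kernel $(r\rho)^{-(n-2)/2} J_\nu(r\rho)$ is real and symmetric in $(r,\rho)$.

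The real content is (iv). The strategy is to first show that the kernel $K(r,\rho):=(r\rho)^{-(n-2)/2} J_\nu(r\rho)$ is an eigenfunction of $A_\nu$ in $r$ with eigenvalue $\rho^2$, and then push $A_\nu$ onto $\phi$ by integration by parts. To verify the pointwise identity, I would write $K(r,\rho) = \rho^{-(n-2)/2}\bigl(r^{-(n-2)/2} J_\nu(r\rho)\bigr)$ and use that conjugation of $-\partial_r^2 - \tfrac{n-1}{r}\partial_r$ by $r^{-(n-2)/2}$ produces exactly $-\partial_r^2 - \tfrac{1}{r}\partial_r$ plus the correction $-\bigl((n-2)/2\bigr)^2/r^2$, which is then cancelled by the zeroth-order term of $A_\nu$, leaving
$$A_\nu K(\cdot,\rho) = \rho^{-(n-2)/2}r^{-(n-2)/2}\Bigl(-\partial_r^2 - \tfrac{1}{r}\partial_r + \tfrac{\nu^2}{r^2}\Bigr) J_\nu(r\rho).$$
Applying Bessel's equation $J_\nu''(x) + \tfrac{1}{x}J_\nu'(x) + (1-\nu^2/x^2) J_\nu(x)=0$ at $x=r\rho$ and chain-ruling in $r$ collapses the bracket to $\rho^2 J_\nu(r\rho)$, giving $A_\nu K = \rho^2 K$.

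For the integration by parts, I would use the identity $r^{n-1}\bigl(\partial_r^2 + \tfrac{n-1}{r}\partial_r\bigr)f = \partial_r(r^{n-1}\partial_r f)$, which makes $A_\nu$ formally symmetric on $L^2(r^{n-1}dr)$; two integrations by parts then move $A_\nu$ from $\phi$ onto $K$ and produce $\rho^2\mathcal{H}_\nu\phi(\rho)$. The main technical obstacle is controlling the boundary contributions: at $r=0$ this is handled by the small-$r$ asymptotic $|J_\nu(r)|\lesssim r^\nu$ from \eqref{2.9} together with $\nu \geq (n-2)/2$, which forces $r^{n-1}K$ and $r^{n-1}\partial_r K$ to vanish in the limit, while at $r=\infty$ the Schwartz decay of $\phi$ combined with the $r^{-1/2}$ envelope of $J_\nu$ from \eqref{2.14} kills the contribution. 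A standard density argument finally extends the identity from the Schwartz class to the natural $L^2$-domain of $A_\nu$.
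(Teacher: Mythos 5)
Your proposal is correct. Note, however, that the paper does not prove this lemma at all: it simply cites \cite{BPSS,PSS} and \cite[Chapter 9]{Taylor}, so there is no in-paper argument to compare against. What you have written is precisely the standard proof found in those references -- conjugation by the weight $r^{(n-2)/2}$ to reduce (i)--(iii) to the classical Hankel inversion and Plancherel theorems on $L^2(r\,dr)$, and for (iv) the eigenfunction identity $A_\nu K(\cdot,\rho)=\rho^2 K(\cdot,\rho)$ via Bessel's equation followed by two integrations by parts using $r^{n-1}\bigl(\partial_r^2+\tfrac{n-1}{r}\partial_r\bigr)f=\partial_r(r^{n-1}\partial_r f)$, with the boundary terms at $r=0$ killed by $|J_\nu(x)|\lesssim x^\nu$ and $\nu\geq (n-2)/2$. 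One trivial slip in the prose: the conjugation of $-\partial_r^2-\tfrac{n-1}{r}\partial_r$ by $r^{-(n-2)/2}$ produces the correction $+\bigl((n-2)/2\bigr)^2/r^2$ (not $-$), which is then cancelled by the $-\bigl((n-2)/2\bigr)^2/r^2$ piece of the potential in $A_\nu$; your displayed formula for $A_\nu K$ is nonetheless correct, so the argument is unaffected.
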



\subsection{The expression of the solution.}
Consider the following Cauchy problem:
\begin{equation}\label{2.16}
\begin{cases}
i\partial_{t}u+H u=0,\\
u(0,z)=u_0(z).
\end{cases}
\end{equation}
By \eqref{2.5}, we have the expansion
\begin{equation*}
u_0(z)=\sum_{\nu\in\chi_\infty}\sum_{\ell=1}^{d(\nu)}a_{\nu,\ell}(r)\varphi_{\nu,\ell}(\theta).
\end{equation*}
Let us consider the equation \eqref{2.16} in polar coordinates
$(r,\theta)$. Write $v(t,r,\theta)=u(t,z)$ and $g(r,\theta)=u_0(z)$.
Then $v(t,r,\theta)$ satisfies that
\begin{equation}\label{2.17}
\begin{cases}
i\partial_{t}
v-\partial_{rr}v-\frac{n-1}r \partial_r v-\frac1{r^2}\Delta_{h}v+\frac{q(\theta)}{r^2}v=0\\
v(0,r,\theta)=g(r,\theta),
\end{cases}
\end{equation}
where
\begin{equation*}
g(r,\theta)=\sum_{\nu\in\chi_\infty}\sum_{\ell=1}^{d(\nu)}a_{\nu,\ell}(r)\varphi_{\nu,\ell}(\theta).
\end{equation*}
Using separation of variables, we can write $v$ as a linear
combination of products of functions  and eigenfunctions
\begin{equation}\label{2.18}
v(t,r,\theta)=\sum_{\nu\in\chi_\infty}\sum_{\ell=1}^{d(\nu)}v_{\nu,\ell}(t,r)\varphi_{\nu,\ell}(\theta),
\end{equation}
where $v_{\nu,\ell}$ is given by
\begin{equation*}
\begin{cases}
i\partial_{t}
v_{\nu,\ell}-\partial_{rr}v_{\nu,\ell}-\frac{n-1}r\partial_rv_{\nu,\ell}+\frac{\lambda_\nu}{r^2}v_{\nu,\ell}=0, \\
v_{\nu,\ell}(0,r)=a_{\nu,\ell}(r)
\end{cases}
\end{equation*}
for each $\nu\in\chi_\infty$ and $1\leq\ell\leq d(\nu)$. Recall
$A_{\nu}$ defined in \eqref{2.8}, then it reduces to consider
\begin{equation}\label{2.19}
\begin{cases}
i\partial_{t}
v_{\nu,\ell}+A_{\nu}v_{\nu,\ell}=0, \\
v_{\nu,\ell}(0,r)=a_{\nu,\ell}(r).
\end{cases}
\end{equation}
Applying the Hankel transform to the equation \eqref{2.19}, we have
by $(\rm{iv})$ in Lemma \ref{Hankel}
\begin{equation}\label{2.20}
\begin{cases}
i\partial_{t}
\tilde{ v}_{\nu,\ell}+\rho^2\tilde{v}_{\nu,\ell}=0 \\
\tilde{v}_{\nu,\ell}(0,\rho)=b_{\nu,\ell}(\rho),
\end{cases}
\end{equation}
where
\begin{equation}\label{2.21}
\tilde{v}_{\nu,\ell}(t,\rho)=(\mathcal{H}_{\nu}
v_{\nu,\ell})(t,\rho),\quad
b_{\nu,\ell}(\rho)=(\mathcal{H}_{\nu}a_{\nu,\ell})(\rho).
\end{equation}
Solving this ODE and inverting the Hankel transform, we obtain
\begin{equation*}
\begin{split}
v_{\nu,\ell}(t,r)&=\int_0^\infty(r\rho)^{-\frac{n-2}2}J_{\nu}(r\rho)\tilde{v}_{\nu,\ell}(t,\rho)\rho^{n-1}\mathrm{d}\rho\\
&=\int_0^\infty(r\rho)^{-\frac{n-2}2}J_{\nu}(r\rho)e^{
it\rho^2}b_{\nu,\ell}(\rho)\rho^{n-1}\mathrm{d}\rho.
\end{split}
\end{equation*}
Therefore we get
\begin{equation}\label{2.22}
\begin{split} &u(t,z)=e^{itH}u_0=v(t,r,\theta)
\\&=\sum_{\nu\in\chi_\infty}\sum_{\ell=1}^{d(\nu)}\varphi_{\nu,\ell}(\theta)\int_0^\infty(r\rho)^{-\frac{n-2}2}J_{\nu}(r\rho)e^{
it\rho^2}b_{\nu,\ell}(\rho)\rho^{n-1}\mathrm{d}\rho
\\&=\sum_{\nu\in\chi_\infty}\sum_{\ell=1}^{d(\nu)}\varphi_{\nu,\ell}(\theta)\mathcal{H}_{\nu}\big[e^{
it\rho^2}b_{\nu,\ell}(\rho)\big](r).
\end{split}
\end{equation}\vspace{0.2cm}

\section{Localized estimates of Hankel transforms}
To prove Theorem \ref{thm1}, we need the following linear localized
estimates. As mentioned in the introduction, we need develop the
decay of the Bessel function and explore the oscillation both in
$e^{it\rho^2}$ and the Bessel function to prove these localized
estimates. Since these estimates take the same form for radial case
and general case, we use the notation $\chi_K$ for finite $K$ or
$K=\infty$ to treat the cases together in the following proof.
\begin{proposition}\label{linear estimates}
Let $\beta \in C_c^\infty(\R)$ supported in $I:=[1,2]$ and $R>0$ be
a dyadic number. Then the following linear restriction estimates
hold:\vspace{0.1cm}

$\bullet$ for $q=2$,
\begin{equation}\label{3.1}
\begin{split}
\big\|\sum_{\nu\in\chi_K}\sum_{\ell=1}^{d(\nu)}\varphi_{\nu,\ell}(\theta)&\mathcal{H}_{\nu}\big[e^{it\rho^2}
b_{\nu,\ell}(\rho)\beta(\rho)\big](r)\big\|_{L^2_t(\R;L^2_{\mu(r)}([R,2R];L^{2}_\theta(\Sigma)))}\\&\lesssim\min\left\{R^\frac12,
R^{\frac
n2}\right\}\Big\|\Big(\sum_{\nu\in\chi_K}\sum_{\ell=1}^{d(\nu)}|b_{\nu,\ell}(\rho)|^2\Big)^{\frac12}\beta(\rho)\Big\|_{L^{2}_{\mu(\rho)}};
\end{split}
\end{equation}

$\bullet$ for $q=\infty$,

\begin{equation}\label{3.2}
\begin{split}
\big\|\sum_{\nu\in\chi_K}\sum_{\ell=1}^{d(\nu)}\varphi_{\nu,\ell}(\theta)&\mathcal{H}_{\nu}\big[e^{it\rho^2}
b_{\nu,\ell}(\rho)\beta(\rho)\big](r)\big\|_{L^\infty_t(\R;L^\infty_{\mu(r)}([R,2R];L^{2}_\theta(\Sigma)))}\\&\lesssim
\min\left\{R^{-\frac{n-1}2},
1\right\}\Big\|\Big(\sum_{\nu\in\chi_K}\sum_{\ell=1}^{d(\nu)}(1+\nu)^{\frac13}|b_{\nu,\ell}(\rho)|^2\Big)^{\frac12}\beta(\rho)\Big\|_{L^{1}_{\mu(\rho)}};
\end{split}
\end{equation}
and
\begin{equation}\label{3.3}
\begin{split}
\big\|\sum_{\nu\in\chi_K}\sum_{\ell=1}^{d(\nu)}\varphi_{\nu,\ell}(\theta)&\mathcal{H}_{\nu}\big[e^{it\rho^2}
b_{\nu,\ell}(\rho)\beta(\rho)\big](r)\big\|_{L^\infty_t(\R;L^\infty_{\mu(r)}([R,2R];L^{2}_\theta(\Sigma)))}\\&\lesssim
\min\left\{R^{-\frac{n-1}2},
1\right\}\Big\|\Big(\sum_{\nu\in\chi_K}\sum_{\ell=1}^{d(\nu)}|b_{\nu,\ell}(\rho)|^2\Big)^{\frac12}\beta(\rho)\Big\|_{L^{2}_{\mu(\rho)}};
\end{split}
\end{equation}

$\bullet$ for $q=3p'$ and $2\leq p<4$,
\begin{equation}\label{3.4}
\begin{split}
\big\|\sum_{\nu\in\chi_K}\sum_{\ell=1}^{d(\nu)}&\varphi_{\nu,\ell}(\theta)\mathcal{H}_{\nu}\big[e^{it\rho^2}
b_{\nu,\ell}(\rho)\beta(\rho)\big](r)\big\|_{L^q_t(\R;L^q_{\mu(r)}([R,2R];L^{2}_\theta(\Sigma)))}\\&\lesssim
\min\left\{R^{(n-1)(\frac1q-\frac12)}, R^{\frac
nq}\right\}\Big\|\Big(\sum_{\nu\in\chi_K}\sum_{\ell=1}^{d(\nu)}(1+\nu)^{\frac4q}|b_{\nu,\ell}(\rho)|^2\Big)^{\frac12}\beta(\rho)\Big\|_{L^{p}_{\mu(\rho)}};
\end{split}
\end{equation}
and $1\leq p <2$
\begin{equation}\label{3.5}
\begin{split}
\big\|\sum_{\nu\in\chi_K}\sum_{\ell=1}^{d(\nu)}&\varphi_{\nu,\ell}(\theta)\mathcal{H}_{\nu}\big[e^{it\rho^2}
b_{\nu,\ell}(\rho)\beta(\rho)\big](r)\big\|_{L^q_t(\R;L^q_{\mu(r)}([R,2R];L^{2}_\theta(\Sigma)))}\\&\lesssim
\min\left\{R^{(n-1)(\frac1q-\frac12)}, R^{\frac
nq}\right\}\Big\|\Big(\sum_{\nu\in\chi_K}\sum_{\ell=1}^{d(\nu)}(1+\nu)^{\frac2q+\frac13}|b_{\nu,\ell}(\rho)|^2\Big)^{\frac12}\beta(\rho)\Big\|_{L^{p}_{\mu(\rho)}};
\end{split}
\end{equation}

$\bullet$ for $q=4$ and $\forall \epsilon>0$
\begin{equation}\label{3.6}
\begin{split}
\big\|\sum_{\nu\in\chi_K}\sum_{\ell=1}^{d(\nu)}&\varphi_{\nu,\ell}(\theta)\mathcal{H}_{\nu}\big[e^{it\rho^2}
b_{\nu,\ell}(\rho)\beta(\rho)\big](r)\big\|_{L^4_t(\R;L^4_{\mu(r)}([R,2R];L^{2}_\theta(\Sigma)))}\\&\lesssim
\min\left\{R^{-\frac{n-1}4+\epsilon}, R^{\frac
n4}\right\}\Big\|\Big(\sum_{\nu\in\chi_K}\sum_{\ell=1}^{d(\nu)}(1+\nu)|b_{\nu,\ell}(\rho)|^2\Big)^{\frac12}\beta(\rho)\Big\|_{L^{4}_{\mu(\rho)}}.
\end{split}
\end{equation}
\end{proposition}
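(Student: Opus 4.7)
My plan is to establish the six estimates one at a time, by first using the orthonormality of the eigenfunctions $\{\varphi_{\nu,\ell}\}$ on $\Sigma$ to replace the $L^2_\theta$ norm by an $\ell^2(\nu,\ell)$ sum, so that every inequality reduces to a scalar bound on the radial profile
$F_{\nu,\ell}(t,r):=\mathcal{H}_\nu\big[e^{it\rho^2}b_{\nu,\ell}(\rho)\beta(\rho)\big](r).$
After this reduction each bound splits along the natural dichotomy $R\lesssim 1$ versus $R\gtrsim 1$: in the first regime the small-argument bound \eqref{2.9} gives $|J_\nu(r\rho)|^2\lesssim (r\rho)^{n-2}$ uniformly in $\nu\ge(n-2)/2$, and integrating $r^{n-1}\,dr$ over $[R,2R]$ automatically produces the $R^{n/q}$ factor; in the second regime the three-range asymptotic Lemma~\ref{Bessel} takes over.

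For \eqref{3.1} ($q=2$) I would apply Fubini and then Plancherel in $t$ after the change of variable $\tau=\rho^2$, reducing everything to the uniform estimate $\int_R^{2R} r\,|J_\nu(r\rho)|^2\,dr\lesssim \min(R,R^n)$ for $\rho\in I$, which one obtains by integrating each of the three cases of Lemma~\ref{Bessel} separately (the transition zone of width $\nu^{1/3}$ contributes $\sim\nu\sim R$). For \eqref{3.2} ($q=\infty$) I would take absolute values inside the Hankel integral, use Minkowski's integral inequality to pull out $L^1_\rho$, and combine with the uniform pointwise bound $|J_\nu(r)|\lesssim (1+\nu)^{1/6}r^{-1/2}$, obtained from Lemma~\ref{Bessel} by writing $(1+\nu)^{-1/3}=(1+\nu)^{1/6}(1+\nu)^{-1/2}$ in the Airy window; this yields the desired $R^{-(n-1)/2}$ factor at the cost of the angular loss $(1+\nu)^{1/3}$. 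For \eqref{3.3} this angular loss must be eliminated while upgrading the right-hand side from $L^1_\rho$ to $L^2_\rho$; the way to do this is to exploit the compound oscillation of $e^{it\rho^2}$ and $J_\nu(r\rho)$. In the range $r\rho\gg\nu$ I would substitute the asymptotic $J_\nu(r\rho)\sim(r\rho)^{-1/2}\sum_{\pm}a_\pm e^{\pm ir\rho}+O(r^{-1})$ to rewrite $F_{\nu,\ell}$ as an oscillatory integral with phase $t\rho^2\pm r\rho$; since its $\rho$-Hessian is $2t$, stationary phase together with a $TT^*$-in-$\rho$ argument converts the missing $(1+\nu)^{1/6}$ into an $L^2_\rho$ norm. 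In the transition window $r\rho\sim\nu$ I would use Schl\"afli's representation \eqref{2.10} to expose an extra oscillation in the auxiliary phase variable, then perform stationary phase in the joint variables to obtain the same bound.

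For the intermediate exponents \eqref{3.4} and \eqref{3.5} ($q=3p'$) I would use complex interpolation in the vector-valued $\ell^2(\nu,\ell)$ setting between the $q=2$ bound \eqref{3.1} and the appropriate $q=\infty$ endpoint (\eqref{3.3} when $p\ge 2$, \eqref{3.2} when $1\le p<2$); this recovers both the prefactor $\min(R^{(n-1)(1/q-1/2)},R^{n/q})$ and the angular weights $(1+\nu)^{4/q}$ and $(1+\nu)^{2/q+1/3}$. For the endpoint \eqref{3.6} ($q=4$), interpolation is insufficient and I would instead run a bilinear Carleson--Sj\"olin/$TT^*$ argument: squaring the $L^4_{t,r}$ norm of $\sum_{\nu,\ell}\varphi_{\nu,\ell}F_{\nu,\ell}$, Plancherel in $t$ enforces the parabolic constraint $\rho^2-\rho'^2=\mathrm{const}$, after which I would dyadically decompose $|\rho-\rho'|\sim 2^{-k}$. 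Inside each piece I would split according to the two Bessel regimes announced in the introduction: in $1\ll\nu\sim r\rho\ll\nu^2$ the slow $\nu^{-1/3}$ decay of Lemma~\ref{Bessel} yields the angular weight $(1+\nu)$, while in $r\rho\gg\nu^2$ I would apply the complete asymptotic expansion of \cite{Stein1,Watson}, whose coefficients are absolutely summable exactly at the threshold $\nu^2$. Since $q=4$ sits on the boundary of Hardy--Littlewood--Sobolev, I would insert a Whitney decomposition of the diagonal $\{\rho=\rho'\}$ before summing the bilinear estimates across dyadic scales, which supplies the $\epsilon$-loss in \eqref{3.6}. The main obstacles will be \eqref{3.3}, where the $(1+\nu)^{1/3}$ Airy-window loss must be absorbed by combining the Schr\"odinger-time oscillation with Schl\"afli's formula, and \eqref{3.6}, where the bilinear argument has to simultaneously accommodate the two Bessel regimes, the parabolic constraint, the Whitney decomposition, and the endpoint nature of Hardy--Littlewood--Sobolev.
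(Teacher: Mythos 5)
Your reductions for \eqref{3.1}, \eqref{3.2}, \eqref{3.3}, your treatment of the regime $R\lesssim 1$, and your bilinear/Whitney plan for \eqref{3.6} all match the paper's argument in substance. The genuine gap is in \eqref{3.4} and \eqref{3.5}: these do \emph{not} follow by interpolating \eqref{3.1} with the $q=\infty$ bounds. For $R\gg1$ the prefactor in \eqref{3.1} is $R^{1/2}$, which already sits off the line $R^{(n-1)(\frac1q-\frac12)}$ at $q=2$ by a factor $R^{1/2}$; interpolating with weights $\frac2q$ and $1-\frac2q$ therefore yields $R^{\frac1q}\cdot R^{\frac{n-1}q-\frac{n-1}2}=R^{\frac nq-\frac{n-1}2}$, which exceeds the claimed $R^{(n-1)(\frac1q-\frac12)}$ by $R^{1/q}$. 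A second warning sign is that interpolating the two weight-free endpoints \eqref{3.1} and \eqref{3.3} cannot produce the angular weight $(1+\nu)^{4/q}$, which appears in \eqref{3.4} precisely because part of the proof needs it. The weaker prefactor is not harmless: carried into Section 4 it shrinks the admissible range of $q$ in Theorem \ref{thm1} from $q>\frac{2(n+1)}{n}$ to $q>\frac{2n}{n-1}$.

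The paper closes this gap by splitting the eigenvalue parameter $\nu$ against $R$. For $\nu\gg R$ the Bessel function is exponentially small. For $\sqrt R\lesssim\nu\lesssim R$ interpolation of the $q=2$ and $q=\infty$ bounds does work, because the deficit $R^{1/q}$ can be absorbed into the angular weight via $\nu\gtrsim\sqrt R$, i.e. $R^{2/q}\lesssim(1+\nu)^{4/q}$ on $|b_{\nu,\ell}|^2$. For $\nu\ll\sqrt R$ one must run the bilinear $TT^*$/Carleson--Sj\"olin argument: the complete asymptotic expansion of $J_\nu$ (whose coefficients are absolutely summable exactly when $\nu^2\ll r$), the change of variables $(s_1,s_2)=(\rho_1\pm\rho_2,\rho_1^2-\rho_2^2)$, Hausdorff--Young in $(t,r)$, and Hardy--Littlewood--Sobolev applied to the kernel $|\rho_1-\rho_2|^{-2/q}$ --- which is why $q>4$ is required there and why $q=4$ needs the Whitney decomposition. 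In short, the bilinear machinery you reserve for \eqref{3.6} is needed for every $q=3p'\in(4,6]$ in \eqref{3.4}; and \eqref{3.5} is then obtained by interpolating the core of \eqref{3.4} with \eqref{3.9}, not \eqref{3.1} with \eqref{3.2}, which loses the same factor $R^{1/q}$.
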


\begin{remark}
The estimates above are essentially established by breaking things
into $R\lesssim1$ and $R\gg1$ due to the different asymptotic
behavior of Bessel function on each regime.

\end{remark}

\begin{remark}\label{K} The implicit constant is independent of $K$, which allows us to sum over all of $\chi_\infty$ in next section.
In other words, we can replace $\chi_K$ by $\chi_\infty$ in the
above estimates. When the initial data is radial Schwartz function,
$K$ is finite hence the sum over $\ell$ and $\nu$ converges. If the
initial data is a Schwartz function (not necessary radial), $K$ may
be infinite, and however the summation also converges due to the
Schwartz property. More precisely, since the initial data is
Schwartz, $b_{\nu,\ell}$ decays likely $(1+\nu)^{-N}$ for any $N>0$.
On the other hand, we note $d(\nu)\sim \nu^{n-2}$ hence the sum
converges.
\end{remark}

\begin{remark}The loss of angular regularity in \eqref{3.5} is
much more than \eqref{3.4}. We only use \eqref{3.5} to conclude
\eqref{1.5}. By the radial assumption, one has that $K$ is finite
hence the loss of angular regularity is trivial.
\end{remark}

The rest of this section is devoted to proving this Proposition. We
first note that by orthogonality of the angular eigenfunctions
$\varphi_{\nu,\ell}$
\begin{equation}\label{new}
\begin{split}
\big\|\sum_{\nu\in\chi_K}\sum_{\ell=1}^{d(\nu)}\varphi_{\nu,\ell}(\theta)&\mathcal{H}_{\nu}\big[e^{it\rho^2}
b_{\nu,\ell}(\rho)\beta(\rho)\big](r)\big\|_{L^{2}_\theta(\Sigma)}\\&=\left\{
\sum_{\nu\in\chi_K}\sum_{\ell=1}^{d(\nu)}\Big|\mathcal{H}_{\nu}\big[e^{it\rho^2}
b_{\nu,\ell}(\rho)\beta(\rho)\big](r)\Big|^2\right\}^{1/2}\\&=r^{-\frac{n-2}2}\left(\sum_{\nu\in\chi_K}\sum_{\ell=1}^{d(\nu)}\big|\int_0^\infty
e^{it\rho^2} J_{\nu}( r\rho)b_{\nu,\ell}(\rho)\beta(\rho)\rho^{
n/2}\mathrm{d}\rho\big|^2\right)^{\frac12}.
\end{split}
\end{equation}
Now we prove \eqref{3.1}-\eqref{3.6} hold for $ R\lesssim1$. To do
this, we need the following Lemma.

\begin{lemma}\label{lem1} Let $b_{\nu,\ell}(\rho)$ and $\beta(\rho)$ be as
in Proposition \ref{linear estimates}, then the following estimate
holds for $q\geq2$ and $R\lesssim1$
\begin{equation}\label{3.7}
\begin{split}
\Big\|r^{-\frac{n-2}2}\Big(\sum_{\nu\in\chi_K}\sum_{\ell=1}^{d(\nu)}\big|\int_0^\infty
e^{it\rho^2} J_{\nu}(
r\rho)&b_{\nu,\ell}(\rho)\beta(\rho)\rho^{\frac
n2}\mathrm{d}\rho\big|^2\Big)^{\frac12}
\Big\|_{L^q_t(\R;L^q_{\mu(r)}[R,2R])}\\&\lesssim R^{\frac nq}
\Big\|\Big(\sum_{\nu\in\chi_K}\sum_{\ell=1}^{d(\nu)}|b_{\nu,\ell}(\rho)|^2\Big)^{\frac12}\beta(\rho)\Big\|_{L^{q'}_{\mu(\rho)}}.
\end{split}
\end{equation}
\end{lemma}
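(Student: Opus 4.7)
The plan is to establish \eqref{3.7} at the endpoints $q=2$ and $q=\infty$ and then interpolate. Set
$$F_{\nu,\ell}(t,r):=r^{-\frac{n-2}{2}}\int_0^\infty e^{it\rho^2}J_\nu(r\rho)b_{\nu,\ell}(\rho)\beta(\rho)\rho^{n/2}\,d\rho,$$
so that the left-hand side of \eqref{3.7} is $\|(\sum_{\nu,\ell}|F_{\nu,\ell}|^2)^{1/2}\|_{L^q_tL^q_{\mu(r)}[R,2R]}$. Since $\rho\sim 1$ on the support of $\beta$ and $r\in[R,2R]$ with $R\lesssim 1$, one has $r\rho\lesssim 1$ throughout, so the small-argument Bessel bound $|J_\nu(r\rho)|\leq (r\rho/2)^\nu/\Gamma(\nu+1)$ implied by \eqref{2.9} is in force.

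For the $q=2$ endpoint, Plancherel in $t$ (via the substitution $s=\rho^2$) converts each $\|F_{\nu,\ell}(\cdot,r)\|_{L^2_t}^2$ into a weighted integral in $\rho$; integrating against $r^{n-1}\,dr$ on $[R,2R]$ and inserting the Bessel bound yields
$$\|F_{\nu,\ell}\|_{L^2_tL^2_{\mu(r)}[R,2R]}^2\lesssim\frac{2^{2\nu}}{\Gamma(\nu+1)^2}\,R^{2\nu+2}\,\|b_{\nu,\ell}\beta\|_{L^2_{\mu(\rho)}}^2.$$
The prefactor $2^{2\nu}/\Gamma(\nu+1)^2$ is uniformly bounded in $\nu$ by Stirling, and $\nu\geq(n-2)/2$ together with $R\lesssim 1$ forces $R^{2\nu+2-n}\leq 1$; summing in $(\nu,\ell)$ via Fubini then gives the $q=2$ endpoint with constant $R^{n/2}$.

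For the $q=\infty$ endpoint, apply the triangle inequality in the Hilbert space $\ell^2_{\nu,\ell}$ to the $\rho$-integral defining $F$:
$$\Big(\sum_{\nu,\ell}|F_{\nu,\ell}(t,r)|^2\Big)^{1/2}\leq\int_0^\infty\sup_\nu\bigl(r^{-\frac{n-2}{2}}|J_\nu(r\rho)|\rho^{n/2}\bigr)\Big(\sum_{\nu,\ell}|b_{\nu,\ell}|^2\Big)^{1/2}\beta\,d\rho.$$
The Bessel bound rewrites the kernel as $r^{\nu-(n-2)/2}\rho^{\nu+n/2}/(2^\nu\Gamma(\nu+1))$; for $\nu\geq(n-2)/2$, $\rho\sim 1$, and $R\lesssim 1$, this is uniformly bounded in $\nu$ (again by Stirling together with the favourable sign of $2\nu+2-n$). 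Taking $L^\infty_{t,r}$ and noting that $L^1(d\rho)\sim L^1_{\mu(\rho)}$ on the support of $\beta$ yields the $q=\infty$ bound with constant $1=R^0$.

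Finally, the map $\mathbf{b}\mapsto F$ is linear, and the two endpoints above identify it as bounded from $L^2_{\mu(\rho)}(\ell^2_{\nu,\ell})$ to $L^2_{t,r}(\ell^2_{\nu,\ell})$ with norm $\lesssim R^{n/2}$, and from $L^1_{\mu(\rho)}(\ell^2_{\nu,\ell})$ to $L^\infty_{t,r}(\ell^2_{\nu,\ell})$ with norm $\lesssim 1$. Complex interpolation (Riesz--Thorin for Hilbert-valued $L^p$-spaces) then produces the sought bound $\lesssim R^{n/q}$ from $L^{q'}_{\mu(\rho)}(\ell^2_{\nu,\ell})$ to $L^q_{t,r}(\ell^2_{\nu,\ell})$ for every $q\in[2,\infty]$, which is precisely \eqref{3.7}. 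The main technical point is the uniform-in-$\nu$ control of the Bessel constants at both endpoints; this ultimately rests on $\nu\geq(n-2)/2$ together with $R\lesssim 1$ making the exponent $2\nu+2-n$ non-negative, which is what allows the scaling $R^n$ to dominate.
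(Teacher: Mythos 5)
Your proof is correct and is essentially the paper's argument: both rest on Fourier analysis in $t$ after the substitution $s=\rho^2$ together with the small-argument Bessel bound \eqref{2.9}, Stirling's formula, and the lower bound $\nu\ge (n-2)/2$ (which makes the exponent of $R$ beyond $R^{n/q}$ non-negative). The only cosmetic difference is that you prove the $q=2$ and $q=\infty$ endpoints and invoke vector-valued Riesz--Thorin, whereas the paper applies Minkowski's inequality (using $q\ge2$) followed by the scalar Hausdorff--Young inequality, which is the packaged form of the same interpolation.
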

We postpone the proof for a moment. Notice the $\nu$-weights
appearing in \eqref{3.2}, \eqref{3.4}-\eqref{3.6} are larger than
$1$,  and note $q'\leq p$ and compact support of $\beta$, we use the
H\"older inequality and Lemma \ref{lem1} to show Proposition
\ref{linear estimates} holds for $R\lesssim 1$.
\begin{proof}[Proof of Lemma~\ref{lem1}] Since $q\geq 2$, the Minkowski
inequality and Fubini's theorem show that the left hand side of
\eqref{3.7} is bounded by
\begin{equation*}
\begin{split}
\left\|r^{-\frac{n-2}2}\left(\sum_{\nu\in\chi_K}\sum_{\ell=1}^{d(\nu)}\Big\|\int_0^\infty
e^{it\rho^2} J_{\nu}( r\rho)b_{\nu,\ell}(\rho)\beta(\rho)\rho^{\frac
{n-2}2}\rho\mathrm{d}\rho\Big\|^2_{L^q_t(\R)}\right)^{\frac12}\right\|_{L^q_{\mu(r)}([R,2R])}.
\end{split}
\end{equation*}
We write by making variable changes
\begin{equation}\label{var}
\begin{split}
\left\|r^{-\frac{n-2}2}\left(\sum_{\nu\in\chi_K}\sum_{\ell=1}^{d(\nu)}\Big\|\int_0^\infty
e^{it\rho} J_{\nu}(
r\sqrt{\rho})b_{\nu,\ell}(\sqrt{\rho})\beta(\sqrt{\rho})\rho^{\frac
{n-2}4}\mathrm{d}\rho\Big\|^2_{L^q_t(\R)}\right)^{\frac12}\right\|_{L^q_{\mu(r)}([R,2R])}.
\end{split}
\end{equation}
Hence we use the Hausdorff-Young inequality in $t$ and change
variables back to obtain
\begin{equation*}
\begin{split}
\text{LHS of }~\eqref{3.7}\lesssim
\Big\|r^{-\frac{n-2}2}\Big(\sum_{\nu\in\chi_K}\sum_{\ell=1}^{d(\nu)}
\big\|J_{\nu}(
r\rho)b_{\nu,\ell}(\rho)\beta(\rho)\rho^{(n-2)/2+1/q'}\big\|_{L^{q'}_\rho}^2\Big)^{\frac12}
\Big\|_{L^q_{\mu(r)}([R,2R])}.
\end{split}
\end{equation*}
Note the compact support of $\beta$, we obtain by \eqref{2.9}
\begin{equation*}
\begin{split}
&\text{LHS of}~\eqref{3.7} \\ \lesssim &\left(\int_{R}^{2R}
r^{-\frac{(n-2)q}2}\Big(\sum_{\nu\in\chi_K}\sum_{\ell=1}^{d(\nu)}
\Big|\frac{(4
r)^{\nu}}{2^{\nu}\Gamma(\nu+\frac12)\Gamma(1/2)}\Big|^2\big\|b_{\nu,\ell}(\rho)\beta(\rho)\big\|_{L^{q'}_\rho}^2\Big)^{\frac
q2} r^{n-1}\mathrm{d}r\right)^{\frac1q}.
\end{split}
\end{equation*}
Note the stirling's formula $\Gamma\left(\nu+1\right)\sim
\sqrt{\nu}(\nu/e)^\nu$, we see the coefficient is bounded
independent of $\nu$. On the other hand, we have the factor
$R^{n/q}R^{\sqrt{\lambda_0+(n-2)^2/4}-(n-2)/2}$ where
$\lambda_0\geq0$ is the smallest eigenvalue of
$-\Delta_h+q(\theta)$. Note compact support of $\beta$, thus we can
adjust the weight in $\rho$ to prove \eqref{3.7}.
\end{proof}

\begin{remark} It might help to given an example to show how this
works. If $h=(d\theta)^2$ is the Euclidean metric on the sphere
$\mathbb{S}^{n-1}$, $n\geq 2$ and $q(\theta)=0$, then we have for
$H=-\Delta$
$$\chi_\infty=\left\{(n-2)/2+k; k\in\mathbb{N}\right\}.$$
One can follow the above argument to show \eqref{3.7}.
\end{remark}

To prove Proposition \ref{linear estimates}, it suffices to prove
the followings estimates: for $R\gg1$

$\bullet$ for $q=2$
\begin{equation}\label{3.8}
\begin{split}
\Big\|r^{-\frac{n-2}2}\Big(\sum_{\nu\in\chi_K}\sum_{\ell=1}^{d(\nu)}\big|\int_0^\infty
e^{it\rho^2} J_{\nu}(
r\rho)&b_{\nu,\ell}(\rho)\beta(\rho)\rho^{\frac
n2}\mathrm{d}\rho\big|^2\Big)^{\frac12}
\Big\|_{L^2_t(\R;L^2_{\mu(r)}[R,2R])}\\&\lesssim R^\frac12
\big(\sum_{\nu\in\chi_K}\sum_{\ell=1}^{d(\nu)}\|b_{\nu,\ell}(\rho)\beta(\rho)\|^2_{L^2_{\mu(\rho)}}\big)^{\frac12};
\end{split}
\end{equation}

$\bullet$ for $q=\infty$
\begin{equation}\label{3.9}
\begin{split}
\Big\|r^{-\frac{n-2}2}\Big(\sum_{\nu\in\chi_K}\sum_{\ell=1}^{d(\nu)}\big|\int_0^\infty
&e^{it\rho^2} J_{\nu}(
r\rho)b_{\nu,\ell}(\rho)\beta(\rho)\rho^{\frac
n2}\mathrm{d}\rho\big|^2\Big)^{\frac12}
\Big\|_{L^\infty_t(\R;L^\infty_{\mu(r)}[R,2R])}\\&\lesssim
R^{-\frac{n-1}2}\Big\|\big(\sum_{\nu\in\chi_K}\sum_{\ell=1}^{d(\nu)}(1+\nu)^{\frac13}
\big|b_{\nu,\ell}(\rho)\big|^2\big)^{\frac
12}\beta(\rho)\Big\|_{L^{1}_{\mu(\rho)}};
\end{split}
\end{equation}
and
\begin{equation}\label{3.10}
\begin{split}
\Big\|r^{-\frac{n-2}2}\Big(\sum_{\nu\in\chi_K}\sum_{\ell=1}^{d(\nu)}\big|\int_0^\infty
e^{it\rho^2} J_{\nu}(
r\rho)&b_{\nu,\ell}(\rho)\beta(\rho)\rho^{\frac
n2}\mathrm{d}\rho\big|^2\Big)^{\frac12}
\Big\|_{L^\infty_t(\R;L^\infty_{\mu(r)}[R,2R])}\\&\lesssim
R^{-\frac{n-1}2}\big(\sum_{\nu\in\chi_K}\sum_{\ell=1}^{d(\nu)}\|b_{\nu,\ell}(\rho)\beta(\rho)\|^2_{L^2_{\mu(\rho)}}\big)^{\frac12};
\end{split}
\end{equation}

$\bullet$ for $q=3p'$ and $2\leq p<4$,
\begin{equation}\label{3.11}
\begin{split}
\Big\|r^{-\frac{n-2}2}\Big(\sum_{\nu\in\chi_K}\sum_{\ell=1}^{d(\nu)}\big|\int_0^\infty
&e^{it\rho^2} J_{\nu}(
r\rho)b_{\nu,\ell}(\rho)\beta(\rho)\rho^{\frac
n2}\mathrm{d}\rho\big|^2\Big)^{\frac12}
\Big\|_{L^q_t(\R;L^q_{\mu(r)}[R,2R])}\\&\lesssim
R^{(n-1)(\frac{1}q-\frac12)}\Big\|\big(\sum_{\nu\in\chi_K}\sum_{\ell=1}^{d(\nu)}
(1+\nu)^{\frac{4}{q}}\big|b_{\nu,\ell}(\rho)\big|^2\big)^{\frac
12}\beta(\rho)\Big\|_{L^{p}_{\mu(\rho)}};
\end{split}
\end{equation}
and $1\leq p<2$
\begin{equation}\label{3.12}
\begin{split}
\Big\|r^{-\frac{n-2}2}\Big(\sum_{\nu\in\chi_K}\sum_{\ell=1}^{d(\nu)}\big|\int_0^\infty
&e^{it\rho^2} J_{\nu}(
r\rho)b_{\nu,\ell}(\rho)\beta(\rho)\rho^{\frac
n2}\mathrm{d}\rho\big|^2\Big)^{\frac12}
\Big\|_{L^q_t(\R;L^q_{\mu(r)}[R,2R])}\\&\lesssim
R^{(n-1)(\frac{1}q-\frac12)}\Big\|\big(\sum_{\nu\in\chi_K}\sum_{\ell=1}^{d(\nu)}
(1+\nu)^{{\frac{2}{q}+\frac13}}\big|b_{\nu,\ell}(\rho)\big|^2\big)^{\frac
12}\beta(\rho)\Big\|_{L^{p}_{\mu(\rho)}};
\end{split}
\end{equation}

$\bullet$ for $q=4$, $\forall \epsilon>0$
\begin{equation}\label{3.13}
\begin{split}
\Big\|r^{-\frac{n-2}2}\Big(\sum_{\nu\in\chi_K}\sum_{\ell=1}^{d(\nu)}\big|\int_0^\infty
&e^{it\rho^2} J_{\nu}(
r\rho)b_{\nu,\ell}(\rho)\beta(\rho)\rho^{\frac
n2}\mathrm{d}\rho\big|^2\Big)^{\frac12}
\Big\|_{L^4_t(\R;L^4_{\mu(r)}[R,2R])}\\&\lesssim
R^{-\frac{n-1}4+\epsilon}\Big\|\big(\sum_{\nu\in\chi_K}\sum_{\ell=1}^{d(\nu)}
(1+\nu)\big|b_{\nu,\ell}(\rho)\big|^2\big)^{\frac
12}\beta(\rho)\Big\|_{L^{4}_{\mu(\rho)}}.
\end{split}
\end{equation}

{\bf Step 1.} We first prove \eqref{3.8} holds for $R\gg1$. After
changing variables as \eqref{var} and canceling some factors $r$, we
use the Plancherel theorem in $t$ to show
\begin{equation}\label{3.14}
\begin{split}
\text{LHS of }~\eqref{3.8}\lesssim R^{\frac12}\Big\|
\left(\sum_{\nu\in\chi_K}\sum_{\ell=1}^{d(\nu)}\big\|J_{\nu}( r\rho)
b_{\nu,\ell}(\rho)
\beta(\rho)\rho^{(n-1)/2}\big\|^2_{L^{2}_\rho}\right)^{\frac12}
\Big\|_{L^2_{r}([R,2R])}.
\end{split}
\end{equation}
Along with \eqref{3.14}, it is easy to verify \eqref{3.8}, if we
could prove
\begin{equation}\label{3.15}
\int_R^{2R}|J_{\nu}(r)|^2\mathrm{d}r\leq C,\quad R\gg 1,
\end{equation}
where the constant $C$ is independent of $\nu$ and $R$. To prove
\eqref{3.15}, we write
\begin{equation*}
\begin{split}
\int_R^{2R}|J_{\nu}(r)|^2\mathrm{d}r=\int_{I_1}|J_{\nu}(r)|^2\mathrm{d}r
+\int_{I_2}|J_{\nu}(r)|^2\mathrm{d}r+\int_{I_3}|J_{\nu}(r)|^2\mathrm{d}r
\end{split}
\end{equation*}
where $I_1=[R,2R]\cap[0,\frac \nu 2], I_2=[R,2R]\cap[\frac \nu
2,2\nu]$ and $I_3=[R,2R]\cap[2\nu,\infty]$. By using \eqref{2.12}
and \eqref{2.14} in Lemma \ref{Bessel}, we have
\begin{equation}\label{3.16}
\begin{split}
\int_{I_1}|J_{\nu}(r)|^2\mathrm{d}r\leq C
\int_{I_1}e^{-cr}\mathrm{d}r\leq C e^{-cR},
\end{split}
\end{equation}
and \begin{equation}\label{3.17}
\begin{split}\int_{I_3}|J_{\nu}(r)|^2\mathrm{d}r\leq C.\end{split}
\end{equation}
On the other hand, one has by \eqref{2.13}
\begin{equation*}
\begin{split}
\int_{[\frac \nu 2,2\nu]}|J_{\nu}(r)|^2\mathrm{d}r&\leq C
\int_{[\frac \nu
2,2\nu]}\nu^{-\frac23}(1+\nu^{-\frac13}|r-\nu|)^{-\frac
12}\mathrm{d}r\leq C.
\end{split}
\end{equation*}
Observing $[R,2R]\cap[\frac \nu 2,2\nu]=\emptyset$ unless $R\sim
\nu$, we obtain
\begin{equation}\label{3.18}
\begin{split}
\int_{I_2}|J_{\nu}(r)|^2\mathrm{d}r\leq C.
\end{split}
\end{equation}
This together with \eqref{3.16} and \eqref{3.17} yields
\eqref{3.15}. Hence we finally prove \eqref{3.8}. \vspace{0.2cm}

{\bf Step 2.} To prove \eqref{3.9} and \eqref{3.10} hold for
$R\gg1$, we utilize the Schl\"afli's integral representation of the
Bessel function \eqref{2.10} to write ${J}_{\nu}( r\rho)={E}_{\nu}(
r\rho)+\tilde{J}_{\nu}(r\rho)$.  As before using the Minkowski
inequality and the Hausdorff-Young inequality in $t$, we have by
\eqref{2.11},
\begin{equation*}
\begin{split}
\Big\|r^{-\frac{n-2}2}\Big(\sum_{\nu\in\chi_K}\sum_{\ell=1}^{d(\nu)}\big|\int_0^\infty
e^{it\rho^2}
&{E}_{\nu}(r\rho)b_{\nu,\ell}(\rho)\beta(\rho)\rho^{\frac
n2}\mathrm{d}\rho\big|^2\Big)^{\frac12}
\Big\|_{L^\infty_t(\R;L^\infty_{\mu(r)}([R,2R]))}\\&\lesssim
R^{-\frac n2}\Big\|\big(\sum_{\nu\in\chi_K}\sum_{\ell=1}^{d(\nu)}
\big|b_{\nu,\ell}(\rho)\big|^2\big)^{\frac
12}\beta(\rho)\Big\|_{L^{1}_{\mu(\rho)}}.
\end{split}
\end{equation*}
\vspace{0.2cm} Thus it remains to prove \eqref{3.9} and \eqref{3.10}
replacing ${J}_{\nu}$ by $\tilde{J}_{\nu}$. We decompose $[-\pi,
\pi]$ into three partitions as follows $$[-\pi, \pi]=I_1\cup I_2\cup
I_3$$ where
\begin{equation}\label{3.19}
I_1=\{\theta:|\theta|\leq \delta\},\quad I_2=[-\pi,
-\frac\pi2-\delta]\cup[\frac\pi2+\delta, \pi], \quad
I_3=[-\pi,\pi]\setminus(I_1\cup I_2),
\end{equation} with $0<\delta\ll1$. We define
\begin{equation}\label{3.20}
\Phi_{r,\nu}(\theta)=\sin\theta- \nu\theta/r,
\end{equation} and $\chi_\delta(\theta)$ is a smooth function given by
\begin{equation*}
\chi_{\delta}(\theta)=
\begin{cases}1,\quad
\theta\in[-\delta,\delta];\\0, \quad\theta\not\in[-2\delta,2\delta].
\end{cases}
\end{equation*}
Then we divide $\tilde{J}_\nu(r)$ into three pieces and write
\begin{equation}\label{3.21}
\begin{split}
\tilde{J}_\nu(r)&=\frac1{2\pi}\int_{-\pi}^\pi
e^{ir\Phi_{r,\nu}(\theta)}\mathrm{d}\theta\\&=\frac1{2\pi}\Big(\int_{-\pi}^{\pi}
e^{ir\Phi_{r,\nu}(\theta)}\chi_{\delta}(\theta)\mathrm{d}\theta+\int_{I_2}
e^{ir\Phi_{r,\nu}(\theta)}\mathrm{d}\theta+\int_{I_3}
e^{ir\Phi_{r,\nu}(\theta)}(1-\chi_{\delta}(\theta))\mathrm{d}\theta\Big)\\&=:\tilde{J}^1_\nu(r)+\tilde{J}^2_\nu(r)+\tilde{J}^3_\nu(r).
\end{split}
\end{equation}
When $\theta\in I_2$, the function
$\Phi'_{r,\nu}(\theta)=\cos\theta-\nu/r$ is monotonic in the
intervals $[-\pi, -\frac\pi2-\delta]$ and $[\frac\pi2+\delta, \pi]$
respectively and satisfies that
\begin{equation*}
|\Phi'_{r,\nu}(\theta)|\geq \nu/r+|\cos\theta|\geq\sin\delta.
\end{equation*}
Then by \cite [Proposition 2, Chapter VIII]{Stein1}, we have the
following estimate uniformly in $\nu$
\begin{equation}\label{3.22}
\Big|\frac1{2\pi}\int_{I_2}
e^{ir\Phi_{r,\nu}(\theta)}\mathrm{d}\theta\Big|\leq c_\delta r^{-1}.
\end{equation}
When $\theta\in I_3$, then $|\Phi''_{r,\nu}(\theta)|\geq
\sin\delta$, we have by \cite [Proposition 2, Chapter VIII]{Stein1}
\begin{equation}\label{3.23}
\Big|\frac1{2\pi}\int_{I_3}
e^{ir\Phi_{r,\nu}(\theta)}(1-\chi_{\delta}(\theta))\mathrm{d}\theta\Big|\leq
c_\delta r^{-1/2},
\end{equation}
uniformly in $\nu$.  Using the similar arguments as  above, it
follows from \eqref{3.22} and \eqref{3.23} that
\begin{equation}\label{3.24}
\begin{split}
\Big\|r^{-\frac{n-2}2}\Big(\sum_{\nu\in\chi_K}\sum_{\ell=1}^{d(\nu)}\big|\int_0^\infty
e^{it\rho^2}& \big({\tilde{J}^2}_{\nu}( r\rho)+{\tilde{J}^3}_{\nu}(
r\rho)\big)b_{\nu,\ell}(\rho)\beta(\rho)\rho^{\frac
n2}\mathrm{d}\rho\big|^2\Big)^{\frac12}
\Big\|_{L^\infty_t(\R;L^\infty_{\mu(r)}([R,2R]))}\\&\lesssim
R^{-\frac{n-1}2}\Big\|\big(\sum_{\nu\in\chi_K}\sum_{\ell=1}^{d(\nu)}
\big|b_{\nu,\ell}(\rho)\big|^2\big)^{\frac
12}\beta(\rho)\Big\|_{L^{1}_{\mu(\rho)}}.
\end{split}
\end{equation}
By using Lemma \ref{Bessel}, we see $|\tilde{J}^1_{\nu}( r)|\lesssim
r^{-1/3}$ when $r\sim\nu$. Then arguing as before, we have
\begin{equation}\label{3.25}
\begin{split}
\Big\|r^{-\frac{n-2}2}\Big(\sum_{\nu\in\chi_K}\sum_{\ell=1}^{d(\nu)}\big|\int_0^\infty
e^{it\rho^2}& {\tilde{J}^1}_{\nu}(
r\rho)b_{\nu,\ell}(\rho)\beta(\rho)\rho^{\frac
n2}\mathrm{d}\rho\big|^2\Big)^{\frac12}
\Big\|_{L^\infty_t(\R;L^\infty_{\mu(r)}([R,2R]))}\\&\lesssim
R^{-\frac{n-1}2}\Big\|\big(\sum_{\nu\in\chi_K}\sum_{\ell=1}^{d(\nu)}(1+\nu)^{\frac13}
\big|b_{\nu,\ell}(\rho)\big|^2\big)^{\frac
12}\beta(\rho)\Big\|_{L^{1}_{\mu(\rho)}}.
\end{split}
\end{equation}
We here obtain more decay $r^{-\frac16}$ from the loss of the
angular regularity $\nu^{1/6}$ when $r\sim \nu$. Therefore we prove
\eqref{3.9}. To prove \eqref{3.10} concerning
$\tilde{J}^1_{\nu}(\rho r)$ without loss of angular regularity, we
need to use effectively the oscillation of $e^{ it\rho^2}$. We write
Fourier series of $b_{\nu,\ell}(\rho)$ as
\begin{equation*}
\begin{split}
b_{\nu,\ell}(\rho)=\sum_j b_{\nu,\ell}^j e^{i\rho^2 j}\quad
\text{with}\quad b_{\nu,\ell}^j
=\frac{1}{4\pi}\int_0^{16}e^{-i\rho^2
j}b_{\nu,\ell}(\rho)\rho\mathrm{d}\rho.
\end{split}
\end{equation*}
By the Plancherel theorem and the orthogonality, we remark that
\begin{equation}\label{3.26}
\begin{split}
\big(\sum_{\nu\in\chi_K}\sum_{\ell=1}^{d(\nu)}\|b_{\nu,\ell}(\rho)\rho^{\frac12}\|^2_{L^2_{\mu(\rho)}(I)}\big)^{\frac12}
\cong\big(\sum_{\nu\in\chi_K}\sum_{\ell=1}^{d(\nu)}\sum_j|b_{\nu,\ell}^j|^2\big)^{\frac12}.
\end{split}
\end{equation}
Thus it suffices to prove
\begin{equation}\label{3.27}
\begin{split}
\Big\|r^{-\frac{n-2}2}\Big(\sum_{\nu\in\chi_K}\sum_{\ell=1}^{d(\nu)}\big|\int_0^\infty
e^{it\rho^2}& {\tilde{J}^1}_{\nu}(r\rho)\sum_j b_{\nu,\ell}^j
e^{i\rho^2 j}\beta(\rho)\rho^{\frac
n2}\mathrm{d}\rho\big|^2\Big)^{\frac12}
\Big\|_{L^\infty_t(\R;L^\infty_{\mu(r)}([R,2R]))}\\&\lesssim
R^{-\frac{n-1}2}\Big\|\big(\sum_{\nu\in\chi_K}\sum_{\ell=1}^{d(\nu)}
\big|b_{\nu,\ell}(\rho)\big|^2\big)^{\frac
12}\beta(\rho)\Big\|_{L^{2}_{\mu(\rho)}}.
\end{split}
\end{equation}
For simplicity, we define
\begin{equation}\label{3.28}
\begin{split}
\psi_{t+\frac j4}^\nu(r)=\int_0^\infty e^{i(t+\frac j4)\rho^2}
\int_{\R} e^{ i\rho r\sin\theta-
i\nu\theta}\chi_{\delta}(\theta)\mathrm{d}\theta\beta(\rho)\rho^{\frac
n2}\mathrm{d}\rho.
\end{split}
\end{equation}
Let $m=t+\frac j4$, then we write
\begin{equation}\label{3.29}
\begin{split}
\psi_{m}^\nu(r)&=\int_{\R^2} e^{ i\rho(r\sin\theta+\rho m)} e^{-
i\nu\theta}\beta(\rho)\mathrm{d}\rho\chi_{\delta}(\theta)\mathrm{d}\theta.
\end{split}
\end{equation}
For our purpose, we need to investigate the asymptotic behavior of
the function $\psi_{m}^\nu(r)$. To this end, we consider the
following two cases. Write the phase function
\begin{equation*}
\begin{split}
\Phi_{r,m,\nu}(\rho,\theta)=m\rho^2+\rho r\sin\theta-\nu\theta.
\end{split}
\end{equation*}

$\bullet$ Subcase $(a)$: $4R\leq |m|$. Since $R\geq 1$, then
$|m|\geq 4$. Note that $\rho\in[1/2,4]$, then the derivative of the
phase function in $\rho$ satisfies
\begin{equation*}
\begin{split}
|\partial_\rho\Phi_{r,m,\nu}(\rho,\theta)|=|r\sin\theta+2m\rho|\geq
|m|-r|\sin\theta|\geq |m|/100,
\end{split}
\end{equation*}
by making use of $r\leq 2R\leq |m|$ and $|\theta|\leq 2\delta$.
Integrating by part in $\rho$ gives that
\begin{equation}\label{3.30}
\begin{split}
|\psi_{m}^\nu(r)|\leq C_{\delta,N}(1+|m|)^{-N}.
\end{split}
\end{equation}
Hence keeping in mind $m=t+\frac j4$, we have
\begin{equation*}
\begin{split}
&\Big\|r^{-\frac{n-2}2}\Big(\sum_{\nu\in\chi_K}\sum_{\ell=1}^{d(\nu)}\big|\sum_{\{j:
4R\leq|t+\frac j4|\}} b_{\nu,\ell}^j \psi_{t+\frac
j4}^\nu(r)\big|^2\Big)^{\frac12}
\Big\|_{L^\infty_t(\R;L^\infty_{\mu(r)}([R,2R]))}\\&\leq
C_{\delta,N}R^{-N}\Big\|r^{-\frac{n-2}2}\Big(\sum_{\nu\in\chi_K}\sum_{\ell=1}^{d(\nu)}\Big|\sum_{\{j:
4R\leq|t+\frac j4|\}} |b_{\nu,\ell}^j|\left(1+|t+\frac
j4|\right)^{-N} \Big|^2\Big)^{\frac12}
\Big\|_{L^\infty_t(\R;L^\infty_{\mu(r)}([R,2R]))}.
\end{split}
\end{equation*}
By the Cauchy-Schwarz inequality and choosing $N$ large enough, the
above is bounded by
\begin{equation}\label{3.31}
\begin{split}
&C_{\delta,N}R^{-N}\Big\|r^{-\frac{n-2}2}\Big(\sum_{\nu\in\chi_K}\sum_{\ell=1}^{d(\nu)}\sum_{j}
|b_{\nu,\ell}^j|^2(1+|t+\frac j4|)^{-N} \Big)^{\frac12}
\Big\|_{L^\infty_t(\R;L^\infty_{\mu(r)}([R,2R]))}\\&\leq
C_{\delta,N}R^{-N}\Big(\sum_{\nu\in\chi_K}\sum_{\ell=1}^{d(\nu)}\sum_{j}
|b_{\nu,\ell}^j|^2 \Big)^{\frac12}\lesssim
R^{-N}\Big\|\big(\sum_{\nu\in\chi_K}\sum_{\ell=1}^{d(\nu)}
\big|b_{\nu,\ell}(\rho)\big|^2\big)^{\frac
12}\beta(\rho)\Big\|_{L^{2}_{\mu(\rho)}(I)}.
\end{split}
\end{equation}

$\bullet$  Subcase $(b)$: $|m|<4R$. We recall that
\begin{equation*}
\begin{split}
\psi_{m}^\nu(r)&=\frac1{2\pi}\int_{\R^2}
e^{ir\tilde{\Phi}_{r,m,\nu}(\rho,\theta)}\beta(\rho)\chi_{\delta}(\theta)\mathrm{d}\rho\mathrm{d}\theta,
\end{split}
\end{equation*}
where
$\tilde{\Phi}_{r,m,\nu}(\rho,\theta)=\Phi_{r,m,\nu}(\rho,\theta)/r$.
Then a direct computation yields
\begin{equation}\label{3.32}
\begin{split}
\nabla_{\rho,\theta}\tilde{\Phi}_{r,m,\nu}=\Big(-2m\rho/r+
\sin\theta, \rho \cos\theta-\nu/r\Big)
\end{split}
\end{equation}
and
\begin{equation}\label{3.33}
\begin{split}
\frac{\partial^2\tilde{\Phi}_{r,m,\nu}}{\partial(\rho,\theta)^2}=\begin{pmatrix}
-2 m/r, &\cos\theta\\
\cos\theta,& \rho\sin\theta
\end{pmatrix}
.
\end{split}
\end{equation}
Since $|\theta|<\delta\ll1$ and $|m|<4R$, there exists a small
constant $c>0$ which is independent of $r,m,\nu$ such that
\begin{equation*}
\begin{split}
\Big|\det\Big(\frac{\partial^2\tilde{\Phi}_{r,m,\nu}}{\partial(\rho,\theta)^2}\Big)\Big|=
\left|\frac {2m} r\rho\sin\theta-\cos^2\theta\right|\geq
\cos^2\theta-4m|\sin\theta|/r \geq c.
\end{split}
\end{equation*}
Then the modified phase function
$\tilde{\Phi}_{r,m,\nu}(\rho,\theta)$ is non-degenerate, the
standard stationary phase argument gives that there exists a
constant $C>0$ which is independent of $r,m,\nu$ such that
\begin{equation}\label{3.34}
\begin{split}
|\psi_{m}^\nu(r)|\leq C r^{-1}.
\end{split}
\end{equation}
For fixed $t, R$, we define $A=\{j\in\Z: |t+\frac j4|\leq 4R\}$. It
is easy to see  $\sharp A$ is $O(R)$. Thus it follows from
\eqref{3.34} and the Cauchy-Schwarz inequality that
\begin{equation}\label{3.35}
\begin{split}
&\Big\|r^{-\frac{n-2}2}\Big(\sum_{\nu\in\chi_K}\sum_{\ell=1}^{d(\nu)}\big|\sum_{j\in
A} b_{\nu,\ell}^j \psi_{t+\frac j4}^\nu(r)\big|^2\Big)^{\frac12}
\Big\|_{L^\infty_t(\R;L^\infty_{\mu(r)}([R,2R]))}\\&\leq
C_{\delta,N}R^{-\frac{n-1}2}\Big(\sum_{\nu\in\chi_K}\sum_{\ell=1}^{d(\nu)}\sum_{j}
|b_{\nu,\ell}^j|^2 \Big)^{\frac12}\lesssim
R^{-\frac{n-1}2}\Big\|\big(\sum_{\nu\in\chi_K}\sum_{\ell=1}^{d(\nu)}
\big|b_{\nu,\ell}(\rho)\big|^2\big)^{\frac
12}\beta(\rho)\Big\|_{L^{2}_{\mu(\rho)}}.
\end{split}
\end{equation}
 Together with \eqref{3.31}, this gives \eqref{3.27}. Thus it proves
\eqref{3.10}.\vspace{0.2cm}

{\bf Step 3.} We prove \eqref{3.11} and \eqref{3.13}, i.e. the case
$q=3p'$ and $2\leq p\leq4$. The \eqref{3.12} follows from the
interpolation of \eqref{3.11} and \eqref{3.9}. To do so, we need to
use the bilinear argument to explore the oscillation both in
$e^{it\rho^2}$ and the Bessel function $J_{\nu}(r\rho)$. For our
purpose, we have to use the complete asymptotic formula for the
Bessel function \cite{Stein1,Watson} and verify the sum of the
coefficient is absolutely convergent when $\nu^2\ll r$. On the other
hand the Hardy-Littlewood-Sobolev inequality fails at $q=4$, we
require the Whitney-type decomposition to overcome this difficulty.

To prove \eqref{3.11} and \eqref{3.13},  it suffices to prove: for
$q=3p'$ and $2\leq p\leq4$
\begin{equation}\label{3.36}
\begin{split}
\Big\|\Big(\sum_{\nu\in\chi_K}\sum_{\ell=1}^{d(\nu)}\big|\int_0^\infty
e^{it\rho^2} &J_{\nu}(
r\rho)b_{\nu,\ell}(\rho)\beta(\rho)\rho^{\frac
n2}\mathrm{d}\rho\big|^2\Big)^{\frac12} \Big\|_{L^q_{t,r}(\R\times
[R,2R])}\\&\lesssim R^{-\frac
12+\epsilon_q}\Big\|\Big(\sum_{\nu\in\chi_K}\sum_{\ell=1}^{d(\nu)}
(1+\nu)^{\frac{4}{q}}\big|b_{\nu,\ell}(\rho)\big|^2\Big)^{\frac
12}\beta(\rho)\Big\|_{L^{p}_{\mu(\rho)}(I)},
\end{split}
\end{equation}
where $\epsilon_q=\epsilon$ if $q=4$ otherwise $\epsilon_q=0$.

$\bullet$ Case 1: $\nu\in \Omega_1:=\{\nu\in\chi_K: R \ll \nu\}$.

By the Minkowski inequality,  \eqref{2.8} and the Hausdorff-Young
inequality in $t$, it shows that
\begin{equation}\label{4.24}
\begin{split}
\Big\|\Big(\sum_{\nu\in \Omega_1}\sum_{\ell=1}^{d(\nu)}
&\big|\int_0^\infty e^{it\rho^2}J_{\nu}(
r\rho)b_{\nu,\ell}(\rho)\beta(\rho)\rho^{\frac
n2}\mathrm{d}\rho\big|^2\Big)^{\frac12} \Big\|_{L^q_{t,r}(\R\times
[R,2R])}\\&\lesssim \Big(\sum_{\nu\in
\Omega_1}\sum_{\ell=1}^{d(\nu)} \big\|J_{\nu}(
r\rho)b_{\nu,\ell}(\rho)\rho^{\frac
n2-{\frac1q}}\beta(\rho)\big\|_{L^{q'}_\rho
L^{q}_r([R,2R])}^2\Big)^{\frac12}\\&\lesssim
\Big(\sum_{\nu\in\chi_K}\sum_{\ell=1}^{d(\nu)}
\big\|e^{-cr}b_{\nu,\ell}(\rho)\rho^{\frac
n2-{\frac1q}}\beta(\rho)\big\|_{L^{q'}_\rho
L^{q}_r([R,2R])}^2\Big)^{\frac12}\\&\lesssim
Ce^{-cR}\Big\|\big(\sum_{\nu\in\chi_K}\sum_{\ell=1}^{d(\nu)}
\big|b_{\nu,\ell}(\rho)\big|^2\big)^{\frac
12}\beta(\rho)\Big\|_{L^{p}_{\mu(\rho)}(I)}.
\end{split}
\end{equation}

$\bullet$ Case 2: $\nu\in \Omega_2:=\{\nu\in\chi_K: \nu\lesssim R
\lesssim \nu^2\}$.

By \eqref{3.8}, we have by canceling some $r$-weights
\begin{equation}\label{4.25}
\begin{split}
\Big\|\Big(\sum_{\nu\in\Omega_2}\sum_{\ell=1}^{d(\nu)}\big|\int_0^\infty
e^{it\rho^2} &J_{\nu}(
r\rho)b_{\nu,\ell}(\rho)\beta(\rho)\rho^{\frac
n2}\mathrm{d}\rho\big|^2\Big)^{\frac12} \Big\|_{L^2_{t,r}(\R\times
[R,2R])}\\&\lesssim R^{-\frac
12}\Big\|\Big(\sum_{\nu\in\chi_K}\sum_{\ell=1}^{d(\nu)}
(1+\nu)^{2}\big|b_{\nu,\ell}(\rho)\big|^2\Big)^{\frac
12}\Big\|_{L^{2}_{\mu(\rho)}(I)}.
\end{split}
\end{equation}
On the other hand, we obtain by \eqref{3.10}
\begin{equation*}
\begin{split}
\Big\|\Big(\sum_{\nu\in
\Omega_2}\sum_{\ell=1}^{d(\nu)}\big|\int_0^\infty e^{it\rho^2}
J_{\nu}( r\rho)b_{\nu,\ell}(\rho)&\beta(\rho)\rho^{\frac
n2}\mathrm{d}\rho\big|^2\Big)^{\frac12}
\Big\|_{L^\infty_{t,r}(\R\times [R,2R])}\\&\lesssim
R^{-\frac12}\Big\|\big(\sum_{\nu\in\chi_K}\sum_{\ell=1}^{d(\nu)}
\big|b_{\nu,\ell}(\rho)\big|^2\big)^{\frac
12}\beta(\rho)\Big\|_{L^{2}_{\mu(\rho)}(I)}.
\end{split}
\end{equation*}
Interpolating this with \eqref{4.25}, we have
\begin{equation}\label{4.28}
\begin{split}
\Big\|\Big(\sum_{\nu\in
\Omega_2}\sum_{\ell=1}^{d(\nu)}\big|\int_0^\infty e^{it\rho^2}
&J_{\nu}( r\rho)b_{\nu,\ell}(\rho)\beta(\rho)\rho^{\frac
n2}\mathrm{d}\rho\big|^2\Big)^{\frac12} \Big\|_{L^q_{t,r}(\R\times
[R,2R])}\\&\lesssim R^{-\frac
12}\Big\|\Big(\sum_{\nu\in\chi_K}\sum_{\ell=1}^{d(\nu)}
(1+\nu)^{\frac4q}\big|b_{\nu,\ell}(\rho)\big|^2\Big)^{\frac
12}\Big\|_{L^{2}_{\mu(\rho)}(I)}.
\end{split}
\end{equation}

$\bullet$ Case 3: $\nu\in \Omega_3:=\{\nu\in\chi_K:  \nu^2\ll R\}$.

To prove \eqref{3.36} in this case, since the $\nu$-weight is large
than $1$, it suffices to show
\begin{equation}\label{4.29}
\begin{split}
\Big\|\sum_{\nu\in \Omega_3}\sum_{\ell=1}^{d(\nu)}
\big|\int_{I}e^{it\rho^2}b_{\nu,\ell}(\rho)& J_{\nu}(\rho
r)\beta(\rho)\rho^{\frac n2}{\mathrm{d}\rho}\big|^2\Big\|_{L^{\frac
q2}_{t,r}(\R\times [R,2R])}\\&\lesssim
R^{-1+\epsilon_q}\Big\|\big(\sum_{\nu\in\chi_K}\sum_{\ell=1}^{d(\nu)}
\big|b_{\nu,\ell}(\rho)\big|^2\big)^{\frac
12}\beta(\rho)\Big\|^2_{L^{p}_{\mu(\rho)}(I)}.
\end{split}
\end{equation}
To this end, let $\widetilde{\beta}(\rho)=\beta(\rho)\rho^{\frac
n2}$, we rewrite
\begin{equation*}
\begin{split}
&\sum_{\nu\in \Omega_3}\sum_{\ell=1}^{d(\nu)} \big|\int_{I}e^{
it\rho^2}b_{\nu,\ell}(\rho)J_{\nu}(\rho r)\beta(\rho)\rho^{\frac
n2}{\mathrm{d}\rho}\big|^2\\&\qquad=\sum_{\nu\in
\Omega_3}\sum_{\ell=1}^{d(\nu)} \int_{I}e^{
it\rho_1^2}b_{\nu,\ell}(\rho_1)J_{\nu}(\rho_1
r)\widetilde{\beta}(\rho_1){\mathrm{d}\rho_1}\\&\qquad\qquad\qquad\qquad\qquad\qquad\int_{I}e^{-
it\rho_2^2}\overline{b_{\nu,\ell}(\rho_2)J_{\nu}(\rho_2
r)}\widetilde{\beta}(\rho_2){\mathrm{d}\rho_2}.
\end{split}
\end{equation*}

$\bullet$ Subcase (a): $q=3p'$ with $2\leq p<4$.  Before proving
\eqref{4.29}, we recall a complete asymptotic formula for the Bessel
function \cite{Stein1,Watson}. When $\nu$ is fixed, the complete
asymptotic formula for $J_{\nu}(\rho r)$, as $r\rightarrow\infty$,
is
\begin{equation}\label{4.30}
\begin{split}
J_{\nu}(\rho r)\sim&\big(\rho r\big)^{-\frac12}\cos\big(\rho
r-\frac{\nu\pi}2-\frac\pi4\big)\sum_{m=0}^\infty(\rho
r)^{-2m}a_m(\nu)\\&+\big(\rho r\big)^{-\frac12}\sin\big(\rho
r-\frac{\nu\pi}2-\frac\pi4\big)\sum_{m=0}^{\infty}(\rho
r)^{-2m-1}b_m(\nu)
\end{split}
\end{equation}
where \begin{equation*}
\begin{split}
a_m(\nu)=\frac{(-1)^m\Gamma(\nu+\frac12+2m)}{2^{2m}(2m)!\cdot\Gamma(\nu+\frac12-2m)},~
~b_m(\nu)=\frac{(-1)^m\Gamma(\nu+\frac32+2m)}{2^{(2m+1)}(2m+1)!\cdot\Gamma(\nu-\frac12-2m)}.
\end{split}
\end{equation*}
Now we aim to estimate
\begin{equation*}
\begin{split}
&\sum_{\nu\in \Omega_3}\sum_{\ell=1}^{d(\nu)} \big|\int_{I}e^{
it\rho^2}b_{\nu,\ell}(\rho)J_{\nu}(\rho r)\beta(\rho)\rho^{\frac
n2}{\mathrm{d}\rho}\big|^2\\&\sim r^{-1}\sum_{\nu\in
\Omega_3}e^{i\nu\pi}\sum_{\ell=1}^{d(\nu)}\sum_{m_1=0}^\infty\sum_{m_2=0}^\infty
r^{-2(m_1+m_2)}a_{m_1}(\nu)a_{m_2}(\nu) \\&\quad\int_{I\times I}e^{
it(\rho_1^2-\rho_2^2)}b_{\nu,\ell}(\rho_1)\overline{b_{\nu,\ell}(\rho_2)}
\widetilde{\beta}(\rho_1)\widetilde{\beta}(\rho_2) e^{-
ir(\rho_1\pm\rho_2)}\rho_1^{-2m_1-\frac12}\rho_2^{-2m_2-\frac12}{\mathrm{d}\rho_1}{\mathrm{d}\rho_2}\\&\qquad+\text{similar
terms}.
\end{split}
\end{equation*}
Since the similar terms can be estimated by the same argument, we
only estimate
\begin{equation*}
\begin{split}
&\Big\| r^{-1}\sum_{m_1,m_2=0}^\infty(2\pi
r)^{-2(m_1+m_2)}\int_{I\times I}e^{ it(\rho_1^2-\rho_2^2)}e^{-
ir(\rho_1\pm\rho_2)}\\&\quad\sum_{\nu\in
\Omega_3}\sum_{\ell=1}^{d(\nu)}e^{i\nu\pi}a_{m_1}(\nu)a_{m_2}(\nu)b_{\nu,\ell}(\rho_1)\overline{b_{\nu,\ell}(\rho_2)}
\widetilde{\beta}(\rho_1)\widetilde{\beta}(\rho_2)\rho_1^{-2m_1-\frac12}\rho_2^{-2m_2-\frac12}
{\mathrm{d}\rho_1}{\mathrm{d}\rho_2}\Big\|_{L^{\frac
q2}_{t,r}(\R\times [R,2R])}.
\end{split}
\end{equation*}
Let $$s_1=\rho_1\pm\rho_2,\quad s_2=\rho_1^2-\rho_2^2$$ and
$\Omega\subset \R\times\R$ be the image of $I \times I$ under such
change of variables. Then by changing variables, we need estimate
\begin{equation*}
\begin{split}
\Big\|r^{-1}\sum_{m_1,m_2=0}^\infty r^{-2(m_1+m_2)}
&\Big(\int_{\Omega}e^{i(ts_2+rs_1) }\sum_{\nu\in
\Omega_3}\sum_{\ell=1}^{d(\nu)}a_{m_1}(\nu)a_{m_2}(\nu)b_{\nu,\ell}(\rho_1)\overline{b_{\nu,\ell}(\rho_2)}
\\
&\times\frac{\widetilde{\beta}(\rho_1)\widetilde{\beta}(\rho_2)\rho_1^{-2m_1-\frac12}\rho_2^{-2m_2-\frac12}}{|\rho_1\pm\rho_2|}
{\mathrm{d}s_1}{\mathrm{d}s_2}\Big)\Big\|_{L^{\frac
q2}_{t,r}(\R\times [R,2R])}.
\end{split}
\end{equation*}
Since $q> 4$, by the Hausdorff-Young inequality, it suffices to show
\begin{equation*}
\begin{split}
&\sum_{m_1,m_2=0}^\infty(2\pi R)^{-2(m_1+m_2)}\Big\|\sum_{\nu\in
\Omega_3}\sum_{\ell=1}^{d(\nu)}a_{m_1}(\nu)a_{m_2}(\nu)
b_{\nu,\ell}(\rho_1)\overline{b_{\nu,\ell}(\rho_2)}\\&\qquad\qquad\qquad\qquad\qquad
\times\frac{\widetilde{\beta}(\rho_1)\widetilde{\beta}(\rho_2)\rho_1^{-2m_1-\frac12}\rho_2^{-2m_2-\frac12}}{|\rho_1\pm\rho_2|}
\Big\|_{L^{\frac
q{q-2}}_{s_1,s_2}(\Omega)}\\&\qquad\qquad\qquad\qquad\qquad\lesssim
\Big\|\big(\sum_{\nu\in\chi_K}\sum_{\ell=1}^{d(\nu)}
\big|b_{\nu,\ell}(\rho)\big|^2\big)^{\frac
12}\rho^{\frac{n-1}p}\Big\|^2_{L^{p}_{\mu(\rho)}(I)}.
\end{split}
\end{equation*}
By changing variables back, it reduces to prove
\begin{equation*}
\begin{split}
\sum_{m_1,m_2=0}^\infty(2\pi R)^{-2(m_1+m_2)}&\Big\|\sum_{\nu\in
\Omega_3}\sum_{\ell=1}^{d(\nu)}a_{m_1}(\nu)a_{m_2}(\nu)
b_{\nu,\ell}(\rho_1)\overline{b_{\nu,\ell}(\rho_2)}\frac{\widetilde{\beta}(\rho_1)\widetilde{\beta}(\rho_2)}{|\rho_1\pm\rho_2|^{\frac2q}}
\Big\|_{L^{\frac{q}{q-2}}_{\rho_1,\rho_2}(I^2)}\\&\qquad\qquad\qquad\lesssim
\Big\|\big(\sum_{\nu\in\chi_K}\sum_{\ell=1}^{d(\nu)}
\big|b_{\nu,\ell}(\rho)\big|^2\big)^{\frac
12}\rho^{\frac{n-1}p}\Big\|^2_{L^{p}_{\rho}(I)}.
\end{split}
\end{equation*}
Recalling
$$a_m(\nu)=\frac{(-1)^m\Gamma(\nu+\frac12+2m)}{2^{2m}(2m)!\cdot\Gamma(\nu+\frac12-2m)},$$
it gives that
$$\sup_{\nu\in \Omega_3}|a_m(\nu)|=\frac{\Gamma(\sqrt{R}+\frac12+2m)}{2^{2m}(2m)!\cdot\Gamma(\sqrt{R}+\frac12-2m)}.$$
On the other hand, we have the uniformly estimate
\begin{equation*}
\begin{split}
\sum_{m=0}^\infty(2\pi
R)^{-2m}\frac{\Gamma(\sqrt{R}+\frac12+2m)}{2^{2m}(2m)!\cdot\Gamma(\sqrt{R}+\frac12-2m)}\leq
C.
\end{split}
\end{equation*}
Thus it suffices to prove
\begin{equation*}
\begin{split}
\Big\|\sum_{\nu\in\chi_K}\sum_{\ell=1}^{d(\nu)}
|b_{\nu,\ell}(\rho_1)\overline{b_{\nu,\ell}(\rho_2)}|\frac{\widetilde{\beta}(\rho_1)\widetilde{\beta}(\rho_2)}{|\rho_1\pm\rho_2|^{2/q}}
\Big\|_{L^{\frac q{q-2}}_{\rho_1,\rho_2}(I^2)}\lesssim
\Big\|\big(\sum_{\nu\in\chi_K}\sum_{\ell=1}^{d(\nu)}
\big|b_{\nu,\ell}(\rho)\big|^2\big)^{\frac
12}\rho^{\frac{n-1}p}\Big\|^2_{L^{p}_{\mu(\rho)}(I)}.
\end{split}
\end{equation*}
Since $p>\frac{q}{q-2}$ and $|\rho_1+\rho_2|\geq1$, the case
concerning $|\rho_1+\rho_2|$ is obvious to be proved. By the
Cauchy-Schwarz inequality, it is enough  to prove
\begin{equation}\label{4.31}
\begin{split}
\Big\|\int_{I}
\Big(\sum_{\nu\in\chi_K}\sum_{\ell=1}^{d(\nu)}|b_{\nu,\ell}(\rho_2)|^2\big)^{1/2}\frac{1}{|\rho_1-\rho_2|^{2/q}}
&\Big)^{\frac
q{q-2}}\mathrm{d}\rho_2\Big\|^{\frac{q-2}q}_{L^{[\frac{q-2}q-\frac1p]^{-1}\frac{q-2}q}_{\rho_1}(I)}\\&\lesssim
\Big\|\big(\sum_{\nu\in\chi_K}\sum_{\ell=1}^{d(\nu)}
\big|b_{\nu,\ell}(\rho)\big|^2\big)^{\frac
12}\Big\|_{L^{p}_{\rho}(I)}.
\end{split}
\end{equation}
Since assuming $q=3p'>4$, we have
$$1+\frac{q}{q-2}(\frac{q-2}q-\frac1p)=\frac{q}{q-2}\frac2q+\frac1p\frac q{q-2}.$$
Then \eqref{4.31} follows from the Hardy-Littlewood-Sobolve
inequality.\vspace{0.2cm}

$\bullet$ Subcase (b): $q=4$ and $p=4$. In this subcase, the
Hardy-Littlewood-Sobolev inequality fails, we cannot use the above
argument to prove \eqref{4.29}. We need a Whitney-type decomposition
to $I$. Performing a Whitney decomposition to $I$, for each $j\geq
0$, we break up $I$ into $O(2^j)$ dyadic intervals $Q_{\bar k}^j$ of
length $2^{-j}$ and also define $Q_{\bar k}^j \simeq Q_{\bar k'}^j$
if they are cousins, i.e. $Q_{\bar k}^j$ and $Q_{\bar k'}^j$ are not
adjacent but have adjacent parents. Then by \eqref{2.17}, we can
write the above as the following decomposition
\begin{equation*}
\begin{split}
\sum_{\nu\in\Omega_3}\sum_{\ell=1}^{d(\nu)}
\sum_{j\geq0}\sum_{\bar{k}}\sum_{\bar{k}':Q_{\bar k}^j \simeq
Q_{\bar k'}^j}F_{\bar{k}}^j~\overline{G_{\bar{k}'}^j},
\end{split}
\end{equation*}
where
\begin{equation*}
\begin{split}
F_{\bar{k}}^j=F_{\bar{k}}^j(t,r)= \int_{Q_{\bar k}^j}e^{
it\rho_1^2}b_{\nu,\ell}(\rho_1)J_{\nu}(\rho_1
r)\widetilde{\beta}(\rho_1){\mathrm{d}\rho_1},
\end{split}
\end{equation*}
and
\begin{equation*}
\begin{split}
G_{\bar{k}'}^j=G_{\bar{k}'}^j(t,r)= \int_{Q_{{\bar k}'}^j}e^{
it\rho_2^2}b_{\nu,\ell}(\rho_2)J_{\nu}(\rho_2
r)\widetilde{\beta}(\rho_2){\mathrm{d}\rho_2}.
\end{split}
\end{equation*}
Thus by triangle inequality and $\rho\in[1,2]$, it suffices to prove
\begin{equation}\label{4.32}
\begin{split}
\sum_{j\geq \log R}\Big\|\sum_{k\in \Omega_3}\sum_{\ell=1}^{d(\nu)}
\sum_{\bar{k}}\sum_{\bar{k}':Q_{\bar k}^j \simeq Q_{\bar
k'}^j}&F_{\bar{k}}^j~\overline{G_{\bar{k}'}^j}\Big\|_{L^{2}_{t,r}(\R\times
[R,2R])}\\&\lesssim
R^{-1}\Big\|\big(\sum_{\nu\in\chi_K}\sum_{\ell=1}^{d(\nu)}
\big|b_{\nu,\ell}(\rho)\big|^2\big)^{\frac
12}\Big\|^2_{L^{4}_{\rho}(I)},
\end{split}
\end{equation}
and
\begin{equation}\label{4.33}
\begin{split}
\sum_{j\leq\log R}\Big\|\sum_{k\in \Omega_3}\sum_{\ell=1}^{d(\nu)}
\sum_{\bar{k}}\sum_{\bar{k}':Q_{\bar k}^j \simeq Q_{\bar
k'}^j}&F_{\bar{k}}^j~\overline{G_{\bar{k}'}^j}\Big\|_{L^{2}_{t,r}(\R\times
[R,2R])}\\&\lesssim
R^{-1+\epsilon}\Big\|\big(\sum_{\nu\in\chi_K}\sum_{\ell=1}^{d(\nu)}
\big|b_{\nu,\ell}(\rho)\big|^2\big)^{\frac
12}\Big\|^2_{L^{4}_{\rho}(I)}.
\end{split}
\end{equation}
Firstly, we prove \eqref{4.32}. To this end, by the Cauchy-Schwarz
inequality and the triangle inequality, it follows
\begin{equation}\label{4.34}
\begin{split}
\text{LHS of }~\eqref{4.32}\lesssim \sum_{j\geq \log
R}\sum_{\bar{k}}\sum_{\bar{k}':Q_{\bar k}^j \simeq Q_{\bar
k'}^j}&\Big\|\big(\sum_{\nu\in \Omega_3}\sum_{\ell=1}^{d(\nu)}
|F_{\bar{k}}^j|^2\big)^{\frac12}\Big\|_{L^{2}_{t,r}(\R\times
[R,2R])}\\&\times\Big\|\big(\sum_{\nu\in
\Omega_3}\sum_{\ell=1}^{d(\nu)}
|G_{\bar{k}'}^j|^2\big)^{\frac12}\Big\|_{L^{\infty}_{t,r}(\R\times
[R,2R])}.
\end{split}
\end{equation}
By  \eqref{2.14}, the Minkowski inequality, H\"older's inequality
and the Hausdorff-Young inequality in $t$, we have by arguing as
before
\begin{equation}\label{4.35}
\begin{split}
&\Big\|\big(\sum_{\nu\in \Omega_3}\sum_{\ell=1}^{d(\nu)}
|G_{\bar{k}'}^j|^2\big)^{\frac12}\Big\|_{L^{\infty}_{t,r}(\R\times
[R,2R])}\\&\quad\lesssim\Big(\sum_{\nu\in
\Omega_3}\sum_{\ell=1}^{d(\nu)} \big\|\int_{Q_{{\bar k}'}^j}e^{
it\rho_2^2}b_{\nu,\ell}(\rho_2)J_{\nu}(\rho_2
r)\widetilde{\beta}(\rho_2){\mathrm{d}\rho_2}\big\|_{L^{\infty}_{t,r}(\R\times
[R,2R])}^2\Big)^{\frac12}\\&\quad\lesssim
R^{-\frac12}\Big(\sum_{\nu\in\chi_K}\sum_{\ell=1}^{d(\nu)}
\big\|b_{\nu,\ell}(\rho_2)\widetilde{\beta}(\rho_2)
\big\|_{L^{1}_{\rho_2}(Q_{{\bar
k}'}^j)}^2\Big)^{\frac12}\\&\quad\lesssim R^{-\frac12}|Q_{{\bar
k}'}^j|^{\frac1{2}}\Big\|\Big(\sum_{\nu\in\chi_K}\sum_{\ell=1}^{d(\nu)}
|b_{\nu,\ell}(\rho_2)|
^2\Big)^{\frac12}\Big\|_{L^{2}_{\rho_2}(Q_{{\bar k}'}^j)},
\end{split}
\end{equation}
where we make use of $\rho_2\in Q_{{\bar k}'}^j\subset[1,2]$. On the
other hand, the Hausdorff-Young inequality in $t$ and similar
argument as before imply that
\begin{equation}\label{4.36}
\begin{split}
&\Big\|\big(\sum_{\nu\in \Omega_3}\sum_{\ell=1}^{d(\nu)}
|F_{\bar{k}}^j|^2\big)^{\frac12}\Big\|_{L^{2}_{t,r}(\R\times
[R,2R])}\\&\quad= \Big(\sum_{\nu\in \Omega_3}\sum_{\ell=1}^{d(\nu)}
\Big\|\int_{Q_{\bar k}^j}e^{
it\rho_1^2}b_{\nu,\ell}(\rho_1)J_{\nu}(\rho_1r)\widetilde{\beta}(\rho_1){\mathrm{d}\rho_1}\Big\|_{L^{2}_{t,r}(\R\times
[R,2R])}^2\Big)^{\frac12}\\&\quad\lesssim
\Big\|\Big(\sum_{\nu\in\chi_K}\sum_{\ell=1}^{d(\nu)}
|b_{\nu,\ell}(\rho_1)|
^2\Big)^{\frac12}\Big\|_{L^{2}_{\rho_1}(Q_{{\bar k}}^j)}.
\end{split}
\end{equation}
Together with \eqref{4.34} and \eqref{4.35}, it gives
\begin{equation*}
\begin{split}
\text{RHS of }~\eqref{4.34}\lesssim R^{-\frac12}\sum_{j\geq \log
R}2^{-\frac j2}\sum_{\bar{k}}\sum_{\bar{k}':Q_{\bar k}^j \simeq
Q_{\bar k'}^j}&\Big\|\Big(\sum_{\nu\in\chi_K}\sum_{\ell=1}^{d(\nu)}
|b_{\nu,\ell}(\rho_1)|
^2\Big)^{\frac12}\Big\|_{L^{2}_{\rho_1}(Q_{{\bar
k}}^j)}\\&\Big\|\Big(\sum_{\nu\in\chi_K}\sum_{\ell=1}^{d(\nu)}
|b_{\nu,\ell}(\rho_2)|
^2\Big)^{\frac12}\Big\|_{L^{2}_{\rho_2}(Q_{{\bar k}'}^j)}.
\end{split}
\end{equation*}
Recalling the property of the Whitney decomposition that for each
fixed $\bar{k}$, there are only $O(1)$ cousins of $Q_{{\bar k}}^j$,
then we have
\begin{equation*}
\begin{split}
\text{RHS of }~\eqref{4.34}\lesssim R^{-1}
\Big\|\Big(\sum_{\nu\in\chi_K}\sum_{\ell=1}^{d(\nu)}
|b_{\nu,\ell}(\rho_2)| ^2\Big)^{\frac12}\Big\|^2_{L^{2}(I)}.
\end{split}
\end{equation*}
Thus we prove \eqref{4.32}.

Now we prove \eqref{4.33} to complete the proof. Recalling
\eqref{4.30} and the definitions of $F_{\bar{k}}^j$ and
$G_{\bar{k}'}^j$, now we aim to estimate
\begin{equation*}
\begin{split}
&\sum_{\nu\in \Omega_3}\sum_{\ell=1}^{d(\nu)}
\sum_{\bar{k}}\sum_{\bar{k}':Q_{\bar k}^j \simeq Q_{\bar
k'}^j}F_{\bar{k}}^j~\overline{G_{\bar{k}'}^j}\\&\sim
r^{-1}\sum_{\bar{k}}\sum_{\bar{k}':Q_{\bar k}^j \simeq Q_{\bar
k'}^j}\sum_{\nu\in
\Omega_3}e^{i\nu\pi}\sum_{\ell=1}^{d(\nu)}\sum_{m_1=0}^\infty\sum_{m_2=0}^\infty
r^{-2(m_1+m_2)}a_{m_1}(k)a_{m_2}(k) \\&\quad\int_{Q_{\bar k}^j
\times Q_{\bar k'}^j}e^{
it(\rho_1^2-\rho_2^2)}b_{\nu,\ell}(\rho_1)\overline{b_{\nu,\ell}(\rho_2)}
\widetilde{\beta}(\rho_1)\widetilde{\beta}(\rho_2) e^{-
ir(\rho_1\pm\rho_2)}\rho_1^{-2m_1-\frac12}\rho_2^{-2m_2-\frac12}{\mathrm{d}\rho_1}{\mathrm{d}\rho_2}\\&\qquad+\text{similar
terms}.
\end{split}
\end{equation*}
As before, since the similar terms can be estimated by the same
argument, we only consider
\begin{equation*}
\begin{split}
&\Big\| r^{-1}\sum_{m_1,m_2=0}^\infty
r^{-2(m_1+m_2)}\sum_{\bar{k}}\sum_{\bar{k}':Q_{\bar k}^j \simeq
Q_{\bar k'}^j} \int_{Q_{\bar k}^j \times Q_{\bar k'}^j}e^{
it(\rho_1^2-\rho_2^2)}e^{- ir(\rho_1\pm\rho_2)}\\&\quad\sum_{\nu\in
\Omega_3}e^{i\nu\pi}\sum_{\ell=1}^{d(\nu)}a_{m_1}(k)a_{m_2}(k)b_{\nu,\ell}(\rho_1)\overline{b_{\nu,\ell}(\rho_2)}
\widetilde{\beta}(\rho_1)\widetilde{\beta}(\rho_2)\rho_1^{-2m_1-\frac12}\rho_2^{-2m_2-\frac12}
{\mathrm{d}\rho_1}{\mathrm{d}\rho_2}\Big\|_{L^{2}_{t,r}(\R\times
[R,2R])}.
\end{split}
\end{equation*}
For this purpose, let $s_1=\rho_1\pm\rho_2,\quad
s_2=\rho_1^2-\rho_2^2$ and $\Omega_{\bar{k},\bar{k}'}^j\subset
\R\times\R$ be the image of $Q_{\bar k}^j \times Q_{\bar k'}^j$
under such change of variables. Then we aim to estimate
\begin{equation*}
\begin{split}
&\sum_{j\leq\log R}\Big\|r^{-1}\sum_{m_1,m_2=0}^\infty
r^{-2(m_1+m_2)}\sum_{\bar{k}}\sum_{\bar{k}':Q_{\bar k}^j \simeq
Q_{\bar k'}^j}\Big(\int_{\Omega_{\bar{k},\bar{k}'}^j}e^{i(ts_2+rs_1)
}\\
&\times\sum_{\nu\in
\Omega_3}e^{i\nu\pi}\sum_{\ell=1}^{d(k)}a_{m_1}(k)a_{m_2}(k)b_{\nu,\ell}(\rho_1)\overline{b_{\nu,\ell}(\rho_2)}
\frac{\widetilde{\beta}(\rho_1)\widetilde{\beta}(\rho_2)\rho_1^{-2m_1-\frac12}\rho_2^{-2m_2-\frac12}}{|\rho_1\pm\rho_2|}
{\mathrm{d}s_1}{\mathrm{d}s_2}\Big)\Big\|_{L^{2}_{t,r}(\R\times
[R,2R])}.
\end{split}
\end{equation*}
To prove \eqref{4.29}, by the Hausdorff-Young inequality and the
quasi-orthogonality(see \cite[Lemma 6.1]{TVV}),  it suffices to
establish
\begin{equation*}
\begin{split}
\sum_{j\leq\log R} \sum_{m_1,m_2=0}^\infty
R^{-2(m_1+m_2)}&\Big(\sum_{\bar{k}}\sum_{\bar{k}':Q_{\bar k}^j
\simeq Q_{\bar k'}^j}\Big\|\sum_{\nu\in
\Omega_3}\sum_{\ell=1}^{d(\nu)}a_{m_1}(k)a_{m_2}(k)
b_{\nu,\ell}(\rho_1)\overline{b_{\nu,\ell}(\rho_2)}\\&\qquad\times\frac{\widetilde{\beta}(\rho_1)\widetilde{\beta}(\rho_2)\rho_1^{-2m_1-\frac12}\rho_2^{-2m_2-\frac12}}{|\rho_1-\rho_2|}
\Big\|^{2}_{L^2_{s_1,s_2}(\Omega_{\bar{k},\bar{k}'}^j)}\Big)^{\frac12}\\&\qquad\lesssim
R^{\epsilon}\Big\|\big(\sum_{\nu\in\chi_K}\sum_{\ell=1}^{d(\nu)}
\big|b_{\nu,\ell}(\rho)\big|^2\big)^{\frac
12}\Big\|^2_{L^{4}_{\rho}(I)}.
\end{split}
\end{equation*}
By changing variables back, it reduces to prove
\begin{equation*}
\begin{split}
&\sum_{j\leq\log R} \sum_{m_1,m_2=0}^\infty
R^{-2(m_1+m_2)}\\&\Big(\sum_{\bar{k}}\sum_{\bar{k}':Q_{\bar k}^j
\simeq Q_{\bar k'}^j}\Big\|\sum_{\nu\in
\Omega_3}\sum_{\ell=1}^{d(\nu)}a_{m_1}(k)a_{m_2}(k)
b_{\nu,\ell}(\rho_1)\overline{b_{\nu,\ell}(\rho_2)}\frac{\widetilde{\beta}(\rho_1)\widetilde{\beta}(\rho_2)}{|\rho_1\pm\rho_2|^{\frac12}}
\Big\|^2_{L^{2}_{\rho_1,\rho_2}(Q_{\bar k}^j \times Q_{\bar
k'}^j)}\Big)^{\frac12}\\&\qquad\qquad\qquad\qquad\qquad\lesssim
R^{\epsilon}\Big\|\big(\sum_{\nu\in\chi_K}\sum_{\ell=1}^{d(\nu)}
\big|b_{\nu,\ell}(\rho)\big|^2\big)^{\frac
12}\Big\|^2_{L^{4}_{\rho}(I)}.
\end{split}
\end{equation*}
As before, we also have the uniformly estimate
\begin{equation*}
\begin{split}
\sum_{m=0}^\infty
R^{-2m}\frac{\Gamma(\sqrt{R}+\frac12+2m)}{2^{2m}(2m)!\cdot\Gamma(\sqrt{R}+\frac12-2m)}\leq
C.
\end{split}
\end{equation*}
Thus it suffices to prove
\begin{equation*}
\begin{split}
&\sum_{j\leq\log R}\Big(\sum_{\bar{k}}\sum_{\bar{k}':Q_{\bar k}^j
\simeq Q_{\bar k'}^j}\Big\|\sum_{\nu\in
\Omega_3}\sum_{\ell=1}^{d(\nu)}
|b_{\nu,\ell}(\rho_1)\overline{b_{\nu,\ell}(\rho_2)}|\frac{\beta(\rho_1)\beta(\rho_2)}{|\rho_1\pm\rho_2|^{\frac12}}
\Big\|^2_{L^{2}_{\rho_1,\rho_2}(Q_{\bar k}^j \times Q_{\bar
k'}^j)}\Big)^{\frac12}\\&\qquad\qquad\qquad\qquad\qquad\lesssim
R^{\epsilon}\Big\|\big(\sum_{\nu\in\chi_K}\sum_{\ell=1}^{d(\nu)}
\big|b_{\nu,\ell}(\rho)\big|^2\big)^{\frac
12}\Big\|^2_{L^{4}_{\rho}(I)}.
\end{split}
\end{equation*}
By the Cauchy-Schwarz inequality and $\text{dist}(Q_{\bar k}^j,
Q_{\bar k'}^j)\geq 2^{-j}$, we need to prove
\begin{equation*}
\begin{split}
\sum_{j\leq\log R}2^{\frac {j}2}
\Big(\sum_{\bar{k}}&\sum_{\bar{k}':Q_{\bar k}^j \simeq Q_{\bar
k'}^j}\Big\|\Big(\sum_{\nu\in \Omega_3}\sum_{\ell=1}^{d(\nu)}
|b_{\nu,\ell}(\rho_1)|^2\Big)^{\frac12}
\Big\|^{2}_{L^{2}_{\rho_1}(Q_{\bar k}^j )}\Big\|\Big(\sum_{\nu\in
\Omega_3}\sum_{\ell=1}^{d(\nu)}
|b_{\nu,\ell}(\rho_2)|^2\Big)^{\frac12} \Big\|^{2}_{L^{2}_{\rho_2}(
Q_{\bar k'}^j)}\Big)^{\frac
12}\\&\qquad\qquad\qquad\qquad\qquad\lesssim
R^\epsilon\Big\|\big(\sum_{\nu\in\chi_K}\sum_{\ell=1}^{d(\nu)}
\big|b_{\nu,\ell}(\rho)\big|^2\big)^{\frac
12}\Big\|^2_{L^{4}_{\rho}(I)}.
\end{split}
\end{equation*}
Since $|Q_{\bar k}^j|=|Q_{{\bar k}'}^j|=2^{-j}$, by H\"older's
inequality, we can bound the left hand side by
\begin{equation*}
\begin{split}
\sum_{j\leq\log R}
&\Big(\sum_{\bar{k}}\Big\|\Big(\sum_{\nu\in\chi_K}\sum_{\ell=1}^{d(\nu)}
|b_{\nu,\ell}(\rho_1)|^2\Big)^{\frac12}
\Big\|^{4}_{L^{4}_{\rho_1}(Q_{\bar k}^j )}\Big)^{\frac 12}
\end{split}
\end{equation*}
Moreover it is controlled by
\begin{equation*}
\begin{split}
\log R\Big\|\big(\sum_{\nu\in\chi_K}\sum_{\ell=1}^{d(\nu)}
\big|b_{\nu,\ell}(\rho)\big|^2\big)^{\frac
12}\Big\|^2_{L^{4}_{\rho}(I)}\lesssim
R^\epsilon\Big\|\big(\sum_{\nu\in\chi_K}\sum_{\ell=1}^{d(\nu)}
\big|b_{\nu,\ell}(\rho)\big|^2\big)^{\frac
12}\Big\|^2_{L^{4}_{\rho}(I)}.
\end{split}
\end{equation*}
Hence it follows \eqref{4.33}. Therefore it completes the proof of
Proposition \ref{linear estimates}.

\section{Proof of the Theorem \ref{thm1}}

In this section, we utilize Proposition \ref{linear estimates} to
prove Theorem \ref{thm1}. We only need prove \eqref{1.6}. Indeed,
when the initial data $u_0=f(r)$ is radial Schwartz, so is the
Schwartz solution $u(t)$ by \eqref{2.22}. We can follow the argument
in proving \eqref{1.6} to easily obtain \eqref{1.5}, since the
$L^q$-norms on the compact set $\Sigma$ of a constant function are
equivalent for $1\leq q\leq\infty$. We remark that one need use
\eqref{3.5} to obtain \eqref{1.5} for $1\leq p\leq 2$.

Now we prove \eqref{1.6}. By the Sobolev embedding
$H^\alpha(\Sigma)\hookrightarrow L^q(\Sigma)$ with
$\alpha=(n-1)(\frac12-\frac1q)$, it suffices to show
\begin{equation}\label{4.1}
\begin{split}
&\|u(t,z)\|_{L^{{q}}_{t}L^{{q}}_{\mu(r)}L^2_{\theta}(\R\times
\R_+\times \Sigma)} \lesssim \|\mathcal{F}_{H}
{\big((1-\widetilde{\Delta}_h)^{\frac{1}{qn}}u_0\big)}\|_{L^p(M)}
\end{split}
\end{equation}
holds for the conditions $q>\frac{2(n+1)}{n}$ and
$\frac{n+2}q=\frac{n}{p'}$ with $p\geq 2$. By \eqref{2.22}, we have
the dyadic decomposition
\begin{equation}\label{4.2}
\begin{split}
&\|u(t,z)\|_{L^{{q}}_{t}L^{{q}}_{\mu(r)}L^2_{\theta}(\R\times
\R_+\times \Sigma)}\\&\lesssim
\big\|\sum_{\nu\in\chi_\infty}\sum_{\ell=1}^{d(\nu)}\varphi_{\nu,\ell}(\theta)\mathcal{H}_{\nu}\big[e^{it\rho^2}
b_{\nu,\ell}(\rho)\big](r)\big\|_{L^q_t(\R;L^q_{\mu(r)}(\R_+;L^{2}_\theta(\Sigma)))}
\\&\lesssim
\Big(\sum_{R}\big(\sum_{N}\big\|\sum_{\nu\in\chi_\infty}\sum_{\ell=1}^{d(\nu)}\varphi_{\nu,\ell}
(\theta)\\&\quad\times\int_0^\infty(r\rho)^{-\frac{n-2}2}J_{\nu}(r\rho)e^{
it\rho^2}b_{\nu,\ell}(\rho)\rho^{n-1}\beta(\frac\rho
N)\mathrm{d}\rho\big\|_{L^q_t(\R;L^q_{\mu(r)}([R,2R];L^{2}_\theta(\Sigma)))}\big)^q\Big)^{\frac1q}
\end{split}
\end{equation}
where $\beta \in C_c^\infty(\R)$ supported in $[1,2]$ and $R, N>0$
are dyadic numbers. Define
\begin{equation}\label{4.3}
\begin{split}
G(R,N;q):=\Big\|&\Big(\sum_{\nu\in\chi_\infty}\sum_{\ell=1}^{d(\nu)}\big|\int_0^\infty(r\rho)^{-\frac{n-2}2}\\&\quad\times
J_{\nu}(r\rho)e^{
it\rho^2}b_{\nu,\ell}(\rho)\rho^{n-1}\beta(\frac\rho
N)\mathrm{d}\rho\big|\Big)^{\frac12}\Big\|_{L^q_t(\R;L^q_{\mu(r)}([R,2R]))}.
\end{split}
\end{equation}
Now we use Proposition \ref{linear estimates}. As mentioned in
remarks after Proposition \ref{linear estimates}, we can replace
$\chi_K$ by $\chi_\infty$. By scaling argument and \eqref{3.1}, we
have
\begin{equation}\label{4.4}
\begin{split}
&G(R,N;2)\\&\lesssim \min\{(RN)^\frac12, (RN)^{\frac
n2}\}N^{n-\frac{n+2}2-\frac
n2}\Big\|\Big(\sum_{\nu\in\chi_\infty}\sum_{\ell=1}^{d(\nu)}|b_{\nu,\ell}(\rho)|^2\Big)^{\frac12}\beta(\frac\rho
N)\Big\|_{L^{2}_{\mu(\rho)}}.
\end{split}
\end{equation}
On the other hand, for $\bar{q}=3\bar{p}'$ and $2\leq \bar{p}<4$, we
have by \eqref{3.4}
\begin{equation}\label{4.5}
\begin{split}
G(R,N;\bar{q})\lesssim&\min\big\{(RN)^{(n-1)(\frac1{\bar{q}}-\frac12)},
(RN)^{\frac n{\bar{q}}}\big\}N^{n-\frac{n+2}{\bar{q}}-\frac
n{\bar{p}}}\\&\times\Big\|\Big(\sum_{\nu\in\chi_\infty}\sum_{\ell=1}^{d(\nu)}(1+\nu)^{\frac4{\bar
q}}|b_{\nu,\ell}(\rho)|^2\Big)^{\frac12}\beta(\frac\rho
N)\Big\|_{L^{\bar p}_{\mu(\rho)}}.
\end{split}
\end{equation}
Applying interpolation theorem to \eqref{4.4} and \eqref{4.5} with
index $\delta=2-\frac3q-\frac1p$,
$$\frac1q=\frac{1-\delta}2+\frac{\delta}{\bar{q}},\qquad \frac1p=\frac{1-\delta}2+\frac{\delta}{\bar{p}}$$
where ${\bar{q}}=3{\bar{p}}'$, we hence have for
$\frac{n+2}q=\frac{n}{p'}$,
\begin{equation}\label{4.6}
\begin{split}
G(R,N;{q})\lesssim&\min\big\{(RN)^{\frac n q},
(RN)^{-\frac{n-1}2[1-\frac{2(n+1)}{qn}]}\big\}\\ \quad\quad&\times
\Big\|\Big(\sum_{\nu\in\chi_\infty}\sum_{\ell=1}^{d(\nu)}(1+\nu)^{\frac4{qn}}
|b_{\nu,\ell}(\rho)|^2\Big)^{\frac12}\beta(\frac\rho
N)\Big\|_{L^{p}_{\mu(\rho)}}.
\end{split}
\end{equation}
Combining \eqref{4.2} with \eqref{4.6}, we have
\begin{equation*}
\begin{split}
\|u(t,z)\|_{L^{{q}}_{t}L^{{q}}_{\mu(r)}L^2_{\theta}(\R\times
\R_+\times \Sigma)} &\lesssim
\Big(\sum_{R}\Big(\sum_{N}\min\big\{(RN)^{\frac n q},
(RN)^{-\frac{n-1}2[1-\frac{2(n+1)}{qn}]}\big\}\\&\times\Big\|\Big(\sum_{\nu\in\chi_\infty}\sum_{\ell=1}^{d(\nu)}(1+\nu)^{\frac4{qn}}
|b_{\nu,\ell}(\rho)|^2\Big)^{\frac12}\beta(\frac\rho
N)\Big\|_{L^{p}_{\mu(\rho)}}\Big)^q\Big)^{\frac1q}.
\end{split}
\end{equation*}
Since $q>\frac{2(n+1)}n$ and $R, N$ are both dyadic number, we have
\begin{equation*}
\begin{split}
&\sup_{R>0}\sum_N \min\left\{(RN)^{\frac n q},
(RN)^{-\frac{n-1}2[1-\frac{2(n+1)}{qn}]}\right\}<\infty,\\
&\sup_{N>0}\sum_R \min\left\{(RN)^{\frac n q},
(RN)^{-\frac{n-1}2[1-\frac{2(n+1)}{qn}]}\right\}<\infty.
\end{split}
\end{equation*}
By using the Schur's test, for $p$ and $q$ where
$q>\frac{2(n+1)}n>p\geq2$, we have
\begin{equation*}
\begin{split}
&\|u(t,z)\|_{L^{{q}}_{t}L^{{q}}_{\mu(r)}L^2_{\theta}(\R\times
\R_+\times \Sigma)}\\&\lesssim
\Big(\sum_{N}\Big\|\Big(\sum_{\nu\in\chi_\infty}\sum_{\ell=1}^{d(\nu)}(1+\nu)^{\frac4{qn}}
|b_{\nu,\ell}(\rho)|^2\Big)^{\frac12}\beta(\frac\rho
N)\Big\|_{L^{p}_{\mu(\rho)}}^p\Big)^{\frac1q}\\& \lesssim
\|\mathcal{F}_{H}
{\big((1-\widetilde{\Delta}_h)^{\frac{1}{qn}}u_0\big)}\|_{L^p(M)}.
\end{split}
\end{equation*}
Therefore we prove \eqref{4.1}.

\begin{center}

\end{center}
\end{document}